\numberwithin{equation}{section}
\newcommand{\be}{\begin{equation}}
\newcommand{\ee}{\end{equation}}
\newcommand{\bes}{\begin{equation} \begin{split}}
\newcommand{\smskip}{\vspace{2mm}} 
\newcommand{\ef}[1]{\, #1}
\newcommand{\bz}{\vec{z}}
\newcommand{\cA}{{\mathcal A}}
\newcommand{\cB}{{\mathcal B}}
\newcommand{\ssm}{\smallsetminus}
\def\smfrac#1#2{{\textstyle\frac{#1}{#2}}}
\def\smbinom#1#2{{\textstyle\binom{#1}{#2}}}
\newcommand{\cI}{\mathcal{I}}
\newcommand{\cJ}{\mathcal{J}}
\newcommand{\kS}{\mathfrak{S}}
\newcommand{\sss}[1]{\!\scriptstyle{#1}\!}
\newtheorem{theorem}{Theorem}
\newtheorem{lemma}{Lemma}
\newtheorem{prop}{Proposition}
\title
[Doubly-refined enumeration of Alternating Sign Matrices\ldots]
{Doubly-refined enumeration of Alternating Sign Matrices\\
  and determinants of 2-staircase Schur functions}
\author{Philippe Biane}
\address{Philippe Biane, \newline
\rule{18pt}{0pt}CNRS, IGM --- Universit\'e Paris-Est \newline
\rule{18pt}{0pt}77454 Marne-la-Vallée Cedex 2, FRANCE}
\email{bian{}e@un{}iv-m{}lv.fr}
\author{Luigi Cantini}
\address{Luigi Cantini, \newline
\rule{18pt}{0pt}LPTM, and CNRS --- Universit\'e de
Cergy-Pontoise \newline
\rule{18pt}{0pt}95302 Cergy-Pontoise Cedex, FRANCE}
\email{lu{}igi.cant{}ini@u-c{}ergy.fr}
\author{Andrea Sportiello}
\address{Andrea Sportiello, \newline
\rule{18pt}{0pt}LIPN, and CNRS --- Universit\'e Paris-Nord \newline
\rule{18pt}{0pt}93430 Villetaneuse Cedex, FRANCE}
\email{Andre{}a.Sporti{}ello@lip{}n.univ-par{}is13.fr}
\date{\today}
\begin{document}

\begin{abstract}
We prove a determinantal identity concerning Schur functions for
2-staircase diagrams $\lambda=(\ell n+\ell',\ell n,\ell
(n-1)+\ell',\ell (n-1),\cdots,\ell+\ell',\ell,\ell',0)$.  When
$\ell=1$ and $\ell'=0$ these functions are related to the partition
function of the $6$-vertex model at the combinatorial point and hence
to enumerations of Alternating Sign Matrices.
A consequence of our result is an identity concerning the
doubly-refined enumerations of Alternating Sign Matrices.
\newline
\begin{center}
{\bf \today}
\end{center}
\end{abstract}

\keywords{Alternating sign matrices, Schur functions, Compound
  determinants}

\subjclass[2000]{Primary 05E05; Secondary 05A15, 15A15, 82B23.}


\maketitle

\section{Introduction} 

\subsection{Alternating Sign Matrices}

An \emph{alternating sign matrix} (ASM) is a square matrix with
entries in $\{-1,0,+1\}$, such that on each line and on each column,
if one forgets the 0's, the $+1$'s and $-1$'s alternate, and the sum
of each line and each column is equal to 1. It is a famous
combinatorial result that the number of such matrices of size $n$ is
\be
\label{eq.AnEnum}
A_n=\prod_{j=0}^{n-1} 
\frac{(3j+1)!}{(n+j)!}
=
1,2,7,42,429,\ldots
\ee
After having been a conjecture for several years \cite{cit.MRR-asm},
this was first proven by Zeilberger in \cite{cit.Z}, and a simpler
proof was given by Kuperberg \cite{cit.K}, using a connection with the
6-Vertex Model of statistical mechanics, and an appropriate
multivariate extension of the mere counting function $A_n$.
A vivid account can be found in~\cite{cit.B}. 

It follows easily from the definition that an alternating sign matrix
has exactly one $+1$ on its first (and last) row (and column). Thus we
have a sensible four-variable refined statistics, for these four
positions in $\{1,\ldots,n\}^4$, together with their projections on a smaller
number of variables. The dihedral symmetry of the square leaves with a
single one-variable statistics (showing a round formula),
and with two doubly-refined statistics: one, $\cA^n_{ij}$, for the
first and last row (or the rotated case), and one, $\cB^n_{ij}$, for
the first row and column (or the three rotated cases), see
fig.~\ref{fig.asm}, left.
\begin{figure}
\setlength{\unitlength}{13pt}
\begin{picture}(12.8,12.8)
\put(0,0){\includegraphics[scale=1.3]{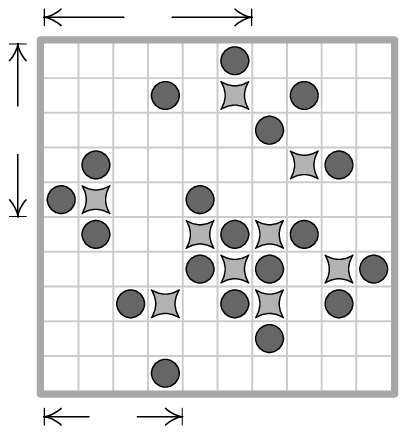}}
\put(4.32,12){$i$}
\put(3.32,0.5){$j$}
\put(0.5,8.72){$k$}
\end{picture}
\quad
\begin{picture}(12.8,12.8)
\put(0,0){\includegraphics[scale=1.3]{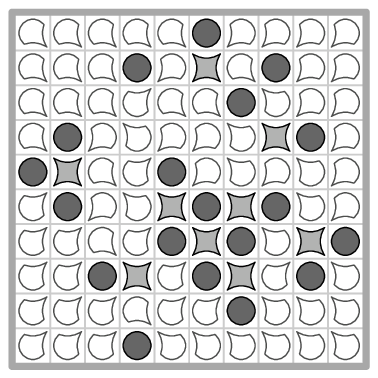}}
\end{picture}
\caption{\label{fig.asm}Left: a typical alternating sign matrix of
  size $n=10$ (empty cells, disks and diamonds stand respectively for
  $0$, $+1$ and $-1$ entries).  This matrix contributes to the
  statistics $\cA^n_{ij}$ and $\cB^n_{ik}$, with $(i,j,k)=(6,4,5)$.
  Right: empty cells are replaced by scale-shaped tiles,
  as to produce a valid tiling (i.e., concavities of neighbouring arcs
  do match). The direction of the tip specifies if
  the cell is of type NW, NE, SE or SW.}
\end{figure}
Matrices $\cA^n$ for $n=1,2,3,4,5$ are given by
\begin{align*}
\cA^1
&=
\begin{pmatrix}
1
\end{pmatrix}
;
&
\cA^2
&=
\begin{pmatrix}
0&1\\
1&0
\end{pmatrix}
;
&
\cA^3
&=
\begin{pmatrix}
0&1&1\\
1&1&1\\
1&1&0
\end{pmatrix}
;
\end{align*}
\begin{align*}
\cA^4
&=
\begin{pmatrix}
0&2&3&2\\
2&4&5&3\\
3&5&4&2\\
2&3&2&0
\end{pmatrix}
;
&
\cA^5
&=
\left(\!
\begin{array}{ccccc}
\sss{0} & \sss{7} & \sss{14} & \sss{14} & \sss{7} \\
\sss{7} & \sss{21} & \sss{33} & \sss{30} & \sss{14} \\
\sss{14} & \sss{33} & \sss{41} & \sss{33} & \sss{14} \\
\sss{14} & \sss{30} & \sss{33} & \sss{21} & \sss{7} \\
\sss{7} & \sss{14} & \sss{14} & \sss{7} & \sss{0}
\end{array}
\!\right)
.
\end{align*}
Of course, by definition $\sum_{i,j} \mathcal{A}^n_{ij} = A_n$, 
i.e.~$1,2,7,42,429,\ldots$ for the cases above.
A simple recursion implies that the sum along the first (and last)
row (and column) gives $A_{n-1}$, 
i.e.~$1,1,2,7,42,\ldots$, and that the bottom-left and top-right
entries are $A_{n-2}$, 
i.e.~$1,1,1,2,7,\ldots$ These simple identities are \emph{linear}.
There exists also \emph{quadratic} relations, of Pl\"ucker nature, relating 
these doubly-refined enumerations to $A_n$
and the (singly-)refined enumerations
(see e.g.~\cite{cit.strog, cit.Colomo2p}).

Evaluate now the \emph{determinant} of these matrices:
\begin{equation*}
\begin{split}
&
\text{det}({\mathcal A^2})=-1 = -1^{-1},\qquad
\text{det}({\mathcal A^3})=1=2^0,
\\
& \qquad
\text{det}({\mathcal A^4})= -7=-7^1, \qquad
\text{det}({\mathcal A^5})= 1764=42^2, \quad \ldots
\end{split}
\end{equation*}
This small numerics suggests a
relation 
that we prove in this paper:
\begin{theorem}\label{th1}
\be
\det (\cA^n)=
(-A_{n-1})^{n-3}
\ef.
\ee
\end{theorem}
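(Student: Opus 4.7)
The plan is to derive Theorem \ref{th1} as a consequence of the 2-staircase Schur-function determinantal identity that is announced as the main result of the paper, by realizing $\cA^n$ as the coefficient matrix of a specialized Schur function via the 6-vertex model with domain wall boundary conditions (DWBC) at the combinatorial point.

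First I would introduce the bivariate generating polynomial
\[
P_n(x,y) \;=\; \sum_{i,j=1}^{n} \cA^n_{ij}\, x^{i-1}\, y^{j-1}
\]
and identify it, via the Izergin--Korepin formula and the Kuperberg/Okada/Stroganov evaluation at the combinatorial point, with (an explicit elementary factor times) the 2-staircase Schur function $s_\lambda$ for $\lambda=(n-1,n-1,n-2,n-2,\ldots,1,1,0,0)$, with two of its arguments kept free and equal to $x, y$ and all remaining ones set to the combinatorial value. This is the point where the hypothesis $\ell=1$, $\ell'=0$ of the 2-staircase setup enters, and it is here that the ``doubly-refined'' structure of the statistic is matched with the two free spectral parameters on the first and last rows of the DWBC lattice.

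The next step is a Vandermonde factorization. For any $n$ pairwise distinct $x_k$ and $n$ pairwise distinct $y_l$, the matrix $\bigl(P_n(x_k,y_l)\bigr)_{k,l}$ factors as $V_x \cdot \cA^n \cdot V_y^T$, with $V_x = (x_k^{i-1})$, $V_y = (y_l^{j-1})$ Vandermonde, giving
\[
\det(\cA^n) \;=\; \frac{\det\bigl(P_n(x_k,y_l)\bigr)}{\det V_x \cdot \det V_y}.
\]
I would then specialize the $x_k, y_l$ at a clever choice of points---most likely the same $n$ values that appear as the ``frozen'' arguments of the Schur function in Step~1, possibly shifted by a root of unity to satisfy the combinatorial constraint---so that the numerator becomes precisely a compound determinant of 2-staircase Schur evaluations of the form covered by the main theorem. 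The latter then provides the numerator in closed product form.

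Finally, one simplifies the resulting product using Kuperberg's $P_n(1,1) = A_n$ and the linear identities $\sum_j \cA^n_{ij} = \cA^{n-1}_i$ recalled above, to extract the closed form $(-A_{n-1})^{n-3}$. The principal obstacle is the second step: selecting the specialization so that the numerator is covered by the main determinantal identity, and then tracking the many Vandermonde-like factors produced by Izergin--Korepin through the computation. The very small exponent $n-3$ (compared with the wealth of factors appearing in the raw expression) signals that massive cancellation must occur, and it is the delicate bookkeeping of this cancellation---rather than any single conceptual leap---that is likely to be the hardest part of the argument, in particular for the sign and for the precise value of the exponent.
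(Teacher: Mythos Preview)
Your outline is essentially the paper's own derivation: equation~(\ref{double}) is your Step~1, Lemma~\ref{lemma} is your Vandermonde factorization, and Theorem~\ref{th2} with $\ell=1$, $\ell'=0$, $\vec z=(1,\ldots,1)$ is applied to the resulting compound determinant of Schur values, after which the factors $s_{\mu_{2n-2,2}}(1,\ldots,1)$ and $s_{\lambda_{n-1}}(1,\ldots,1)^{n-3}$ combine with (\ref{An}) and (\ref{schur3}) to give $(-A_{n-1})^{n-3}$.

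One point deserves correction. You speak of ``specializing the $x_k,y_l$ at a clever choice of points---most likely the same $n$ values that appear as the frozen arguments''. That would make the Vandermonde vanish and is not what happens. In the paper the $u_i,v_j$ stay \emph{generic}; what matters is the fractional-linear change of variable: the Schur arguments are $x_i=\frac{1+qu_i}{q+u_i}$, $y_j=\frac{1+qv_j}{q+v_j}$, so the matrix $\big(s_{\lambda_n}(\vec 1,x_i,y_j)\big)_{i,j}$ is already literally of the form in Theorem~\ref{th2}. That theorem then supplies the factors $\Delta(\vec x)\Delta(\vec y)$ on its right-hand side, and formula~(\ref{vander}) converts them back into $\Delta(\vec u)\Delta(\vec v)$ times elementary factors, cancelling exactly the Vandermonde from Lemma~\ref{lemma}. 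So the ``massive cancellation'' you anticipate is not a delicate bookkeeping exercise but an automatic consequence of Theorem~\ref{th2} carrying its own Vandermonde factors; once that is seen, the exponent $n-3$ and the sign $(-1)^{n-1}$ drop out directly from $c(n,1,0)$ and from the power of $s_{\lambda_{n-1}}$ in~(\ref{eq1}).
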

\noindent
This relation is \emph{non-linear}. Its degree is not fixed, nor
bounded. What is fixed is what we could call ``co-degree'', namely the
system size, minus the degree (in analogy to the definition of
co-dimension of a subspace). Relations of this different nature seem
to be a novelty for the subject at hand.

Our proof of the theorem above will result as corollary of a
much more general result on certain Schur functions.  To see why these two
topics are connected, we have to revert to Kuperberg solution of the
Alternating Sign Matrix conjecture.

\subsection{ASM, the 6-Vertex Model and Schur functions}

It follows from the connection with the 6-Vertex Model, that the
generating function for a certain weighted enumeration of alternating
sign matrices is given by a closed determinantal formula.  For
$B=\{B_{ij} \}_{1 \leq i,j \leq n}$ an ASM, if $B_{ij}=0$, say that
$(i,j)$ is a \emph{north-west} (NW) site (resp.~NE, SE, SW) if,
forgetting the zeroes, the next $+1$ element along the same column is
in the north direction, and along the same row is in the west
direction (and analogously for the other three cases) -- see
the right part of fig.~\ref{fig.asm}.
Consider some complex-valued function $\mu_n(B)$ over
$n \times n$ ASMs, and call
\be
Z_{n}
=
\sum_{B} \mu_n(B)
\ee
the corresponding generating function (in statistical mechanics
$\mu(B)$ is a generalized \emph{Gibbs measure} -- an ordinary measure
if it is real-positive and normalized -- and $Z$ is the
\emph{partition function}).


When $\mu_n(B)$ has the following factorized form, parametrized by
$2n+1$ variables $(x_1,\ldots,x_n, y_1,\ldots,y_n, q) = (\vec{x},\vec{y},q)$,
\begin{subequations}
\label{eqs.6vw}
\begin{align}
\mu_n(B; \vec{x},\vec{y},q)
&= 
\prod_{1 \leq i,j \leq n} w_{i,j}(B)
\ef;
\\
\label{eq.6vw_b}
w_{i,j}(B)
&=
\left\{
\begin{array}{ll}
(q-q^{-1}) \sqrt{\rule{0pt}{8pt}x_i y_j} & B_{ij} = \pm 1; \\
q^{-1} x_i - q y_j             & B_{ij}=0, \quad \textrm{$(i,j)$ is NW or SE;} \\
-x_i + y_j                     & B_{ij}=0, \quad \textrm{$(i,j)$ is NE or SW;}
\end{array}
\right.
\end{align}
\end{subequations}
integrability methods, and a recursion due to Korepin \cite{korepin}, 
allowed Izergin \cite{izergin} to establish a determinantal 
expression for the generating function $Z_{n}(\vec{x},\vec{y},q) =
\sum_B \mu_n(B; \vec{x},\vec{y},q)$.
In particular, this function is symmetric under $\kS_n \times \kS_n$
acting on row- and column-parameters $x_i$ and $y_j$.

The evaluation of $A_n$ is recovered if we set
$q=\exp(\frac{2 \pi i}{3})$, $x_i=q^{-1}$ for all $i$ and $y_j=q$ for 
all $j$, as in this case the local weights $w_{i,j}$ become all equal
to $i \sqrt{3}$, regardless from $B$, and thus $\mu(B)$ becomes
constant (i.e., the \emph{uniform measure}, up to an overall factor).

Later on it has been recognized \cite{cit.strog, cit.okada} that the
value $q=\exp(\frac{2 \pi i}{3})$ (sometimes called the
\emph{combinatorial point}) has a special combinatorial property:
$Z_{n}(\vec{x},\vec{y},q)$ becomes fully symmetric under $\kS_{2n}$
(acting on the $2n$-uple of $q x_i$'s and $q^{-1} y_j$'s together),
more precisely it is proportional to the Schur function associated to the Young
diagram $\lambda_n = (n-1,n-1,n-2,n-2,\ldots,1,1,0,0)$, evaluated on
variables $\{ q x_1, \ldots, q x_n, q^{-1} y_1, \ldots, q^{-1} y_n \}$
(see figure \ref{fig.YDnll}, left, for a picture of this Young
diagram).
\begin{figure}
\setlength{\unitlength}{20pt}
\begin{picture}(12.2,5.6)(-1.4,0)
\put(-1.4,2.5){$2n$}
\put(-0.8,2.5){$\left\{ \rule{0pt}{54pt} \right.$}
\put(0,0){\includegraphics[scale=2]{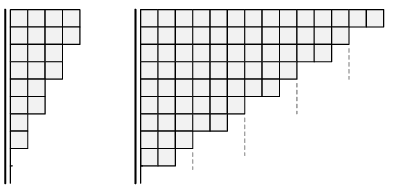}}
\put(5.6,1.6){$\underbrace{\rule{16pt}{0pt}}_{\ell'}$}
\put(5.6,.9){$\underbrace{\rule{29pt}{0pt}}_{\ell}$}
\end{picture}
\caption{\label{fig.YDnll}Left: the Young diagram $\lambda_n$, for
  $n=5$. Right: the Young diagram $\lambda_{n,\ell,\ell'}$, for
  $n=5$, $\ell=3$ and $\ell'=2$.}
\end{figure}
%
One consequence is that we have
\begin{equation}
\label{An}
A_n=
3^{-\binom{n}{2}}
s_{\lambda_n}(1,1,\ldots,1),
\end{equation}
and also the refined enumerations introduced above
are related to specializations of this Schur 
function, in which some parameters are left as indeterminates.

In particular for the $\cA_{ij}^n$'s,
defining the generating function
\be
\cA_n(u,v) 
= 
\sum_{1\leq i,j\leq n}
\cA_{ij}^n
u^{i-1} v^{n-j}
\ef;
\ee 
one finds
\begin{equation}
\label{double}
\cA_n(u,v) 
=
3^{-\binom{n}{2}}
(q^2(q+u)(q+v))^{n-1}
s_{\lambda_n}
\big(
\smfrac{1+qu}{q+u}, \smfrac{1+qv}{q+v}, 1,\ldots,1
\big)
\ef;
\end{equation}
(the rational function $\smfrac{1+qu}{q+u}$ originates from the ratio
of $w_{ij}(B)$ in the two last cases of (\ref{eq.6vw_b})).

A detailed analysis of the double-enumeration formula (\ref{double})
restated in terms of multiple contour integrals, and the proof of a
relation with a double-enumeration formula for totally-symmetric
self-complementary plane partitions in a hexagonal box of size $2n$,
can be found in~\cite{cit.FZJ}.

\subsection{On the determinants of Schur functions}

In this section we state a theorem concerning the determinant of a
matrix whose elements are Schur functions $s_{\lambda_{n}}$.  Not
surprisingly, as these functions are related to ASM enumerations
e.g.\ through equations (\ref{An}) and (\ref{double}), this property
will show up to be the structure behind Theorem \ref{th1}, and
conceivably, it has an interest by itself. For this reason, in this
paper we pursue the task of stating and proving a much wider version
of the forementioned property, than the one that would suffice for
Theorem \ref{th1}. This leads us to introduce a wider family of Young
diagrams.

We define the \emph{2-staircase diagram}
$\lambda_{n,\ell,\ell'}$, for $n \geq 1$, $0 \leq \ell' \leq \ell$, 
as
\be
\label{eq.ladderNL}
\lambda_{n,\ell,\ell'}=\big(
(n-1)\ell+\ell', (n-1)\ell,
(n-2)\ell+\ell', (n-2)\ell,
\ldots, \ell', 0
\big) 
\ee
i.e.\ $(\lambda_{n,\ell,\ell'})_{2j-1}=(n-j)\ell+\ell'$ and
$(\lambda_{n,\ell,\ell'})_{2j}=(n-j)\ell$ (see figure \ref{fig.YDnll},
right).  We call the associated Schur polynomial,
$s_{\lambda_{n,\ell,\ell'}}$, a \emph{$2$-staircase Schur function}.

The name comes from the fact that this family of diagrams generalizes
the well-known family of \emph{staircase diagrams} $\mu_{n,\ell}$
\be
\label{eq.staircasedef}
\mu_{n,\ell} = \big(
(n-1) \ell, (n-2) \ell,\ldots,\ell,0
\big)
\ef.
\ee
The Schur functions $s_{\lambda_n}$ are thus particular cases of
$2$-staircase Schur functions, corresponding to $\ell=1$ and
$\ell'=0$.

The polynomials $s_{\lambda_{n,\ell,\ell'}}$ have been considered
recently by Alain Lascoux. In particular, in \cite[Lemma 13]{cit.Lpf}
they are shown to coincide with the specialization at
$q=\exp(\frac{2\pi i}{\ell + 2})$ of a certain natural extension of
\emph{Gaudin functions}.

In an apparently unrelated context we see the appearence of the
polynomials $s_{\lambda_{n,\ell,\ell'}}$, for $\ell'=0$ only.  This
context, analysed by Paul Zinn-Justin in \cite{cit.pzj_hs}, is the study
of the solution of the $q$KZ equation related to the spin $\ell/2$
representation of the quantum affine algebra
$U_q(\widehat{\mathfrak{sl}(2)})$ with
$q=\exp(\frac{2 \pi i}{\ell + 2})$. 
It is shown that, by taking the scalar product of the
solution of the
$q$KZ equation with a natural reference
state, one obtains $s_{\lambda_{n,\ell,0}}$. 

As anticipated, our Theorem \ref{th1} will be a corollary of the
following result, of independent interest, which exhibits a remarkable
factorization of a determinant of $2$-staircase Schur functions:

\begin{theorem}
\label{th2}
Let $N=\ell (n-1)+\ell' +1$. Let $\{ x_i, y_i\}_{1 \leq i \leq N}$ be
indeterminates, let $f(\vec{z},w_1,w_2)$ stand for
$f(z_1,\ldots,z_{2n-2},w_1,w_2)$, and, for an ordered $N$-uple $\vec
x= (x_1,x_2,\dots,x_N)$, let $\Delta(\vec x)=\prod_{i<j}(x_i-x_j)$
denote the usual Vandermonde determinant.
Then
\be
\label{eq1}
\begin{split}
&
\det
\left(
s_{\lambda_{n,\ell,\ell'}}(\vec{z},x_i,y_j)
\right)_{1\leq i,j\leq N}
\\ & 
\qquad 
=
c(n,\ell,\ell')
\,
\Delta(\vec{x})
\Delta(\vec{y})
\;
\bigg(
\prod_{i=1}^{2n-2} z_i^{\ell'(\ell+1)}
\bigg)
\;
s_{\mu_{2n-2,\ell+1}}^{\ell}(\vec{z})
\;
s_{\lambda_{n-1,\ell,\ell'}}^{\ell(n-2)+\ell'-1}(\vec{z})
\ef.
\end{split}
\ee
The quantity $c(n,\ell,\ell')$ is valued in $\{0,\pm 1\}$. More
precisely,
\be
\label{eq.cTh2}
c(n,\ell,\ell')
=
\left\{
\begin{array}{ll}
(-1)^{(n-1) \binom{\ell+1}{2} + \binom{\ell'+1}{2}}
& 
n=1 \textrm{\quad or \quad} \gcd(\ell+2,\ell'+1)=1
\\
0
&
n>1 \textrm{\quad and\quad} \gcd(\ell+2,\ell'+1) \neq 1
\end{array}
\right.
\ee
\end{theorem}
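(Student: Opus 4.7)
\textit{Plan of proof.} The strategy has three steps: first reduce the identity in (\ref{eq1}) to the computation of a polynomial $P(\vec z)$ depending only on $\vec z$; then express $P(\vec z)$ as a principal minor of an explicit kernel, via Jacobi/Laplace/Cauchy--Binet and the specialisation $\vec x = \vec y = \vec 0$; and finally identify that principal minor with the claimed product of Schur functions. Only the last step is non-routine.

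Denote by $M$ the $N\times N$ matrix with entries $M_{ij} = s_{\lambda_{n,\ell,\ell'}}(\vec z, x_i, y_j)$. The determinant $\det M$ is antisymmetric under swaps of the $x_i$'s (rows) and of the $y_j$'s (columns), hence divisible by $\Delta(\vec x)\Delta(\vec y)$. Since each entry has degree at most $\lambda_1 = (n-1)\ell + \ell' = N-1$ in $x_i$ (and symmetrically in $y_j$), and $\Delta(\vec x)$ already has degree $N-1$ in each $x_i$, the quotient is independent of $\vec x$; similarly of $\vec y$. Thus $\det M = \Delta(\vec x)\Delta(\vec y)\, P(\vec z)$ for a polynomial $P(\vec z)$, symmetric under $\mathfrak S_{2n-2}$.

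To compute $P$, apply the Jacobi formula $s_\lambda(\vec u) = \det(u_r^{p_c})/\Delta(\vec u)$ to the entries of $M$, with $\vec u = (\vec z, x_i, y_j)$ and $p_c = (\lambda_{n,\ell,\ell'})_c + 2n - c$. Laplace-expanding the $2n\times 2n$ numerator along its last two rows, then dividing by $x_i - y_j$, yields
\[
\chi(x_i)\chi(y_j)\, M_{ij} = \sum_{a<b}(-1)^{a+b+1}\, s_{\mu_{a,b}}(\vec z)\, s_{(p_a-1,\,p_b)}(x_i, y_j),
\]
with $\chi(t) = \prod_k(z_k - t)$ and $\mu_{a,b}$ the partition with Jacobi exponents $\{p_c : c \notin \{a,b\}\}$. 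Expanding each two-variable Schur factor in monomials and regrouping writes this as $\chi(x_i)\chi(y_j) M_{ij} = \sum_{r,t \ge 0} K_{rt}(\vec z)\, x_i^r y_j^t$, for a symmetric kernel $K = (K_{rt})$ of finite support. Cauchy--Binet then gives
\[
\prod_i \chi(x_i) \prod_j \chi(y_j)\, \det M = \Delta(\vec x)\Delta(\vec y) \sum_{S,T} s_{\nu_S}(\vec x)\, s_{\nu_T}(\vec y)\, \det K_{S,T}(\vec z),
\]
the sum ranging over $N$-element subsets $S, T$ of the finite index set of $r$'s, with $\nu_S$ the partition read off from $S$. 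Specialising $\vec x = \vec y = \vec 0$, so that $s_\nu(\vec 0) = \delta_{\nu, \emptyset}$ and $\chi(0) = \prod_k z_k$, kills all but the term $(S, T) = ([0, N-1], [0, N-1])$, yielding
\[
P(\vec z) = \prod_k z_k^{-2N}\, \det(K_{rt})_{0 \le r, t \le N-1}.
\]

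The main obstacle is then to show that this $N \times N$ principal minor factors as $\pm \prod_k z_k^{2N + \ell'(\ell+1)}\, s_{\mu_{2n-2, \ell+1}}(\vec z)^{\ell}\, s_{\lambda_{n-1, \ell, \ell'}}(\vec z)^{\ell(n-2) + \ell' - 1}$. I would attack this by a fine analysis of which pairs $(a,b)$ contribute to each $K_{rt}$ in the window $r, t \in [0, N-1]$, using the explicit structure of the exponents $p_c$, and then exhibiting row/column operations (or an LGV-type decomposition into two independent systems of non-intersecting lattice paths, one per Schur factor) that expose a block structure in the principal minor producing the two Schur factors with the prescribed powers. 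The vanishing case $\gcd(\ell+2, \ell'+1) > 1$ has a clean origin: setting $d := \gcd(\ell+2, \ell'+1)$, every $p_c$ is divisible by $d$, so $s_{\lambda_{n,\ell,\ell'}}(\vec u) = \prod_{i<j} h_{d-1}(u_i, u_j) \cdot s_\mu(\vec u^{\,d})$ for a secondary partition $\mu$, and a rank bound on the residual matrix $\bigl(h_{d-1}(x_i, y_j)\, s_\mu(\ldots, x_i^d, y_j^d)\bigr)_{ij}$ gives rank $\le N - 2(n-1)(d-1) < N$ for $n > 1$, forcing $\det M = 0$. The overall sign $c(n, \ell, \ell')$ is finally determined by tracking signs through the Laplace expansion, Cauchy--Binet, and Vandermonde conventions.
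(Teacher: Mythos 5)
Your opening reduction is sound and coincides with the paper's: antisymmetry gives divisibility by $\Delta(\vec x)\Delta(\vec y)$, and since $\deg_{x_i} M_{ij}\leq (n-1)\ell+\ell'=N-1$ the quotient $P(\vec z)$ is independent of $\vec x,\vec y$ (this is exactly Lemma \ref{lemma} and equation (\ref{def-Q})). Your treatment of the vanishing case is also correct and is a genuinely different, rather elegant route: when $d=\gcd(\ell+2,\ell'+1)>1$ all shifted exponents $p_c=(n-j)(\ell+2)+\ell'+1$ or $(n-j)(\ell+2)$ are divisible by $d$, the entries factor through $U_{d-1}(x_i,y_j)\,s_\mu(\ldots,x_i^d,y_j^d)$, and the degree count $N-1-(2n-2)(d-1)<N-1$ forces rank $<N$; the paper instead specializes $x_1=q^kz_1$ and uses the recursion relation (Proposition \ref{thm.spinkore}) to kill a row. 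The Laplace/Cauchy--Binet reduction of $P(\vec z)$ to the single principal minor $\det(K_{rt})_{0\leq r,t\leq N-1}$ is also legitimate.

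The genuine gap is the step you yourself flag as "the main obstacle": showing that this $N\times N$ minor equals $\pm\prod_k z_k^{2N+\ell'(\ell+1)}\,s_{\mu_{2n-2,\ell+1}}^{\ell}\,s_{\lambda_{n-1,\ell,\ell'}}^{\ell(n-2)+\ell'-1}$. That statement \emph{is} the theorem, and "a fine analysis of which pairs $(a,b)$ contribute" followed by "row/column operations or an LGV-type decomposition exposing a block structure" is a hope, not an argument. There is a concrete reason to doubt it can be carried out as described: for $\gcd(\ell+2,\ell'+1)=1$ the factor $s_{\lambda_{n-1,\ell,\ell'}}(\vec z)$ has no linear (indeed, by Proposition \ref{unfact}-type reasoning, no nontrivial) factorization and has degree $\ell(n-1)(n-2)+\ell'(n-1)$ growing with $n$, and it appears to the power $\ell(n-2)+\ell'-1$, which also grows with $n$; no fixed block-triangular structure or path decomposition of an $N\times N$ kernel minor will produce an \emph{unbounded} power of an irreducible polynomial of unbounded degree without some additional mechanism. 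The paper supplies that mechanism: the wheel condition and recursion relation at $q=e^{2\pi i/(\ell+2)}$ (Propositions \ref{thm.spinwheel}, \ref{thm.spinkore}) to specialize the $x_i$ to $q^{k_i}z_{m_i}$, the Bazin--Reiss--Picquet theorem together with the minor-expansion formula (Propositions \ref{prop.minexp}, \ref{prop.bazincoro}) to extract the power $\Delta_{\lambda_{n-1,\ell,\ell'}}^{N-(\ell+2)}$ from the resulting compound determinant, irreducibility to guarantee the factors are coprime, a degree count to show they exhaust $P$, and a separate induction on $n$ to pin down $c(n,\ell,\ell')$ (which is more than "tracking signs" --- it requires a second round of specializations and the wheel condition). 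None of these ingredients, nor substitutes for them, appear in your proposal, so the factorization and the value of the constant remain unproven.
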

\noindent
Remark that, as well known, the staircase Schur function
$s_{\mu_{2n-2,\ell+1}}$ can be further factorized.
Let us recall the definition of the (bivariate
homogeneous) Chebyshev polynomials (of the second kind)
\be
\label{def-U}
U_h(x,y) = \frac{x^{h+1}-y^{h+1}}{x-y}
 = x^h + x^{h-1} y + \cdots + y^h
\ef.
\ee
One can write (cf.\ equation (\ref{eq.staircase}))
\be
\label{eq.smu}
s_{\mu_{N,h}}(\vec{z}) = \prod_{1\leq i<j\leq N}U_{h}(z_i,z_j)
\ef.
\ee
\noindent
As Schur functions have several determinant representations (see
Appendix \ref{sec.schur}), the left-hand--side quantity of the theorem is a
``determinant of determinants'', a structure in linear algebra that is
sometimes called a \emph{compound determinant}
\cite[ch.~VI]{cit.muir}. As we will see, the theory of compound
determinants will have a crucial role in our proof.

Results in the form of Theorem \ref{th2}, or at least approaches to
quantities as in the left-hand side of equation (\ref{eq1}), already
exist in the literature, although mostly with partitions of
comparatively simpler structure. Cf.~\cite{cit.Lpf}, where also a
general approach is outlined. In particular, equations (23) and (24)
of \cite{cit.Lpf} have a form of striking similarity with our theorem
above, while involving respectively a rectangular partition $r^p
\equiv (r,r,\ldots,r)$ ($p$ times), and the basic $1$-staircase
partition $(r,r-1,r-2,\ldots,1,0)$, and the unnumbered third equation
after Corollary 9 of \cite{cit.Lpf} (for which, however, no
factorization is stated) has a similar structure to what will be the
matrix of our analysis, with the only difference that it presents a
Chebyshev polynomial at the denominator instead that the numerator.

Theorem \ref{th2} is easily seen to hold at $n=1$ and any
$(\ell,\ell')$.  This could seem a good base for an induction. However
we use inductive arguments only for the minor task of determining the
overall constant $c(n,\ell,\ell')$, in section
\ref{ssec.const}. Conversely, in section \ref{ssec.divisehard}
we prove divisibility
results, by a method reminiscent of the ``exhaustion of factors''
described in Krattenthaler's survey \cite{cit.Kdet}. 


Note however that the factors $s_{\lambda_{n-1,\ell,\ell'}}$ are
polynomials of `large' degree, $\ell n(n-1)+\ell' n$, with no
factorizations as long as $\gcd(\ell+2,\ell'+1)=1$ (we give a partial
proof of this statement in Proposition \ref{unfact} below -- a full
proof is not hard to achieve).  Thus, in a sense, the tools we develop
in section \ref{sec.preli} should be regarded as an extension of the
exhaustion of factor method to the case in which we have an infinite
family of determinantal identities, and some of the factors have an
\emph{unbounded} degree, scaling with the size parameter associated to
the family.

Finally, let us add a few words on notations: along the paper, if
$\vec{z}$ is a vector of length $n$ (the length will be clear by the
context), we write $f(\vec{z})$ as a shortcut for
$f(z_1,\ldots,z_{n})$, and
$f(\vec{z},w_1,w_2,\ldots)$ as a shortcut for
$f(z_1,\ldots,z_{n},w_1,w_2,\ldots)$. We also write and
$f(\vec{z}_{\ssm i_1 \cdots i_k},w_1,w_2,\ldots)$ if the variables
$z_{i_1}$, \ldots, $z_{i_k}$ are dropped from the list $(z_1, \ldots,
z_n)$.

\vskip .3cm

The paper is organized as follows.  In section \ref{sec.1to2} we show
how to derive Theorem~\ref{th1} from Theorem~\ref{th2} specialized to
$\ell=1$ and $\ell'=0$.  In section \ref{sec.preli} we present some
preparatory lemmas to the proof of Theorem \ref{th2}, which is
presented in section \ref{sec.pf}.  Appendix \ref{sec.schur} collects
some basic definitions and facts on Schur functions, while in appendix
\ref{app.paul} we introduce an even larger class of staircase Schur
functions, and study some of their properties.

\section{Derivation of Theorem 1 from Theorem 2}
\label{sec.1to2}

\noindent
For a polynomial $f(x,y)$, denote by $f(x,y)|_{[x^i y^j]}$ the
coefficient of the monomial $x^i y^j$.
We first state a simple but useful lemma.
\begin{lemma}
\label{lemma}
Let $P(u,v)$ be a polynomial in two
indeterminates, of degree at most $n-1$ in each variable. 
Call $P = (P(u,v)|_{[u^{i-1} v^{j-1}]})_{1 \leq i,j\leq n}$.
Let $u_i$, $v_j$
be indeterminates, then
\be
\det \big( P(u_i,v_j) \big)_{1 \leq i,j\leq n}
=
\Delta(\vec{u})
\Delta(\vec{v})
\,
\det P
\ef.
\ee
\end{lemma}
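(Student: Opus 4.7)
The plan is to recognize the matrix $M = \bigl(P(u_i, v_j)\bigr)_{1 \le i,j \le n}$ as a triple product of a Vandermonde matrix, the coefficient matrix $P$, and a transposed Vandermonde matrix, and then apply the multiplicativity of the determinant. Writing out $P(u,v) = \sum_{i,j=1}^{n} P_{ij}\, u^{i-1} v^{j-1}$ with $P_{ij} = P(u,v)|_{[u^{i-1} v^{j-1}]}$ as in the statement, one reads off $P(u_k, v_l) = \sum_{i,j} u_k^{i-1}\, P_{ij}\, v_l^{j-1} = (U\, P\, V^T)_{kl}$, where $U = (u_k^{i-1})_{1 \le k,i \le n}$ and $V = (v_l^{j-1})_{1 \le l,j \le n}$ are standard Vandermonde matrices.

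Taking determinants of this factorization immediately gives $\det M = \det(U)\,\det(P)\,\det(V)$. The classical Vandermonde evaluation yields $\det(U) = \prod_{i<j}(u_j - u_i) = (-1)^{\binom{n}{2}} \Delta(\vec u)$ under the paper's convention $\Delta(\vec u) = \prod_{i<j}(u_i - u_j)$, and analogously $\det(V) = (-1)^{\binom{n}{2}} \Delta(\vec v)$. The two sign factors $(-1)^{\binom{n}{2}}$ combine to $1$, delivering the claimed identity. There is no serious obstacle; the only point requiring care is that the sign discrepancy between the paper's $\Delta$ and the standard Vandermonde determinant cancels precisely because it enters symmetrically through both the $u$- and $v$-variables. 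The content of the lemma is essentially the observation that if $P$ is \emph{any} $n \times n$ matrix, then evaluating the associated bivariate polynomial on a grid factorizes the resulting determinant into two Vandermondes times $\det P$, so the operation of ``going from coefficient-matrix determinant to evaluation-matrix determinant'' simply introduces the universal factor $\Delta(\vec u)\Delta(\vec v)$.
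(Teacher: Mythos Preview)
Your proof is correct and follows the same route as the paper: factor the evaluation matrix as a product of Vandermonde matrices and the coefficient matrix $P$, then take determinants. You are in fact slightly more careful than the paper about the sign convention for $\Delta$, correctly observing that the two factors of $(-1)^{\binom{n}{2}}$ cancel; the paper glosses over this (and even writes the transpose on the wrong factor), but the underlying argument is identical.
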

\begin{proof}
Call $V(\vec{u})$ the Vandermonde matrix $V_{ij}=u_i^{j-1}$. Then
$\det V(\vec{u}) = \Delta(\vec{u})$, and
the matrix $\big( P(u_i,v_j) \big)_{1 \leq i,j\leq n}$ 
is the product $V(\vec{u})^{\rm T} P \, V(\vec{v})$.
\end{proof}

\smskip
\noindent
This lemma
allows us to state that our Theorem \ref{th2} is equivalent to
\begin{equation}
\label{eq1powers}
\begin{split}
&
\det
\left(
s_{\lambda_{n,\ell,\ell'}}(\vec{z},x,y)|_{[x^i y^j]}
\right)_{0\leq i,j\leq \ell (n-1)+\ell'}=
\\&  \qquad = 
c(n,\ell,\ell')
\;
\bigg( \prod_{i=1}^{2n-2} z_i^{\ell'(\ell+1)} \bigg)
\;
s_{\mu_{2n-2,\ell+1}}^{\ell}(\vec{z})
\;
s_{\lambda_{n-1,\ell,\ell'}}^{\ell(n-2)+\ell'-1}(\vec{z})
\ef,
\end{split}
\end{equation}
(of course, with $c(n,\ell,\ell')$ as in 
(\ref{eq.cTh2})).

Now we proceed to the proof of Theorem \ref{th1}.
One can compute, with $\vec{u}=(u_1,\ldots,u_n)$,
\begin{equation}
\label{vander}
\Delta\big( \big\{ \smfrac{1+qu_i}{q+u_i} \big\} \big)
=
\Delta(\vec{u})
\,
(q^2-1)^{\binom{n}{2}}
\prod_i(q+u_i)^{-(n-1)}.
\end{equation}
It follows from Lemma \ref{lemma}, and equation
(\ref{double}), that 
\be
\begin{split}
&
\Delta(\vec{u}) \Delta(\vec{v})
\det (\cA_{ij}^n)=
\\
&
\quad =
(-1)^{\binom{n}{2}}
\det
\left(\frac{(q^2(q+u_i)(q+v_j))^{n-1}}
{3^{\binom{n}{2}}}
s_{\lambda_{n}}
\big( \smfrac{1+qu_i}{q+u_i}, \smfrac{1+qv_j}{q+v_j}, 1,\ldots,1 \big)
\right)_{1\leq i,j\leq n}
\\
&
\quad =
\left(
\frac{-q^{4}}{3^n}
\right)^{\binom{n}{2}}
\prod_{i=1}^n \big( (q+u_i)(q+v_i)\big) ^{n-1}
\det \left(
s_{\lambda_{n}}
\big( \smfrac{1+qu_i}{q+u_i}, \smfrac{1+qv_j}{q+v_j}, 1,\ldots,1 \big)
\right)_{1\leq i,j\leq n}
\ef.
\end{split}
\ee
Using Theorem \ref{th2} with $\ell=1$ and $\ell'=0$ on the determinant
on the right-hand side (with $x_i = \smfrac{1+qu_i}{q+u_i}$ and 
$y_j = \smfrac{1+qv_j}{q+v_j}$),
and then (\ref{vander}), we obtain
\begin{equation}
\label{eqA}
\begin{split}
\Delta(\vec{u}) \Delta(\vec{v})
\det
(\cA_{ij}^n)
&=
\Delta(\vec{u}) \Delta(\vec{v})
\,
(-1)^{n-1 + \binom{n}{2}}
\left( \frac{(q-q^2)^2}{3^n} \right)^{\binom{n}{2}}
\\
&\quad \times
s_{\mu_{2n-2,2}}(1,1,\ldots,1)
\;
s_{\lambda_{n-1}}^{n-3}(1,1,\ldots,1)
\end{split}
\end{equation}
Recognize that $(q-q^2)^2 = -3$.  By the explicit evaluation of a
staircase Schur function, equation (\ref{eq.smu}), we have
\begin{equation}
\label{schur3}
s_{\mu_{2n-2,1}}(1,1,\ldots,1)
=
3^{\binom{2n-2}{2}}
\end{equation}
Theorem \ref{th1} follows from (\ref{An}), (\ref{eqA}),
(\ref{schur3}).
\hfill $\square$

\section{Preliminary results}
\label{sec.preli}

\subsection{On the minor expansion of a sum of matrices}

Consider $k$ $n \times n$ matrices of indeterminates $M_{ij}^{(a)}$,
$1 \leq i,j \leq n$; $1 \leq a \leq k$.
For $I,J \subseteq [n]$, denote by $M_{I,J}$ the restriction of $M$ to
rows in $I$ and columns in $J$.
Denote by $\cI = (I_1, \ldots, I_k)$ an ordered $k$-uple of subsets
$I_a \subseteq [n]$ (possibly empty), forming a partition of $[n]$.
For two such $k$-uples $\cI$ and $\cJ$, say that
they are \emph{compatible}
if $|I_a| = |J_a|$ for all $a=1,\ldots,k$, and write $\cI \sim \cJ$ in
this case.
Denote by $\epsilon(\cI,\cJ)$ the signature of the permutation that
reorders $(I_1, \ldots, I_k)$ into $(J_1, \ldots, J_k)$, with elements
within the blocks in order.
Then we have
\begin{prop}[Minor expansion of a sum of matrices]
\label{prop.minexp}
\be
\det \Big( \sum_{a=1}^k M^{(a)} \Big)
=
\sum_{\substack{ \cI, \cJ \\ \cI \sim \cJ }}
\epsilon(\cI,\cJ)
\prod_{a=1}^k
\det M^{(a)}_{I_a,J_a}
\ef.
\ee
\end{prop}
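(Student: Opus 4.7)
The plan is to expand the determinant on the left by the Leibniz formula and then reorganize the resulting sum. Starting from
\[
\det\Big(\sum_{a=1}^k M^{(a)}\Big) = \sum_{\sigma\in\kS_n} \epsilon(\sigma) \prod_{i=1}^n \sum_{a=1}^k M^{(a)}_{i,\sigma(i)},
\]
I would expand the inner product of sums by choosing, for each row $i$, an index $\alpha(i)\in[k]$ labelling which of the matrices contributes at position $(i,\sigma(i))$. Such a map $\alpha\colon[n]\to[k]$ is the same datum as the ordered set-partition $\cI=(I_1,\ldots,I_k)$ with $I_a=\alpha^{-1}(a)$. Given $\sigma$ and $\cI$, set $J_a=\sigma(I_a)$; then $\cJ=(J_1,\ldots,J_k)$ is an ordered partition of $[n]$ compatible with $\cI$, and $\sigma$ restricts to bijections $\sigma_a\colon I_a\to J_a$.

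The heart of the argument is the sign identity
\[
\epsilon(\sigma) = \epsilon(\cI,\cJ)\prod_{a=1}^k \epsilon(\sigma_a),
\]
where $\epsilon(\sigma_a)$ is the sign of $\sigma_a$ read as an element of $\kS_{|I_a|}$ via the order-preserving identifications $I_a\simeq[|I_a|]$ and $J_a\simeq[|J_a|]$. To establish it, I would let $\pi_\cI$ (resp.~$\pi_\cJ$) be the permutation of $[n]$ sending $1,2,\ldots,n$ to the concatenation $I_1 I_2\cdots I_k$ (resp.~$J_1 J_2\cdots J_k$), each block listed in increasing order. Then $\pi_\cJ^{-1}\sigma\pi_\cI$ lies in the Young subgroup $\kS_{|I_1|}\times\cdots\times\kS_{|I_k|}\subset\kS_n$ and factors as $\sigma_1\cdots\sigma_k$ under the natural identifications, so its sign equals $\prod_a\epsilon(\sigma_a)$; on the other hand, $\epsilon(\pi_\cJ)\epsilon(\pi_\cI)=\epsilon(\pi_\cJ\pi_\cI^{-1})$ is by construction the signature of the reordering from $I_1\cdots I_k$ to $J_1\cdots J_k$, i.e.\ $\epsilon(\cI,\cJ)$ as defined in the statement.

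Once this sign factorisation is in hand, grouping the terms of the expanded Leibniz sum by $(\cI,\cJ)$ produces
\[
\sum_{\substack{\cI,\cJ\\ \cI\sim\cJ}}\epsilon(\cI,\cJ) \prod_{a=1}^k \bigg(\sum_{\sigma_a\colon I_a\to J_a}\!\!\epsilon(\sigma_a) \prod_{i\in I_a} M^{(a)}_{i,\sigma_a(i)}\bigg),
\]
and each inner sum is precisely $\det M^{(a)}_{I_a,J_a}$, giving the claim. The whole argument is thus a reorganisation of the Leibniz formula; the only delicate point, and the main (though mild) obstacle, is matching the convention defining $\epsilon(\cI,\cJ)$ with the decomposition $\sigma = \pi_\cJ(\pi_\cJ^{-1}\sigma\pi_\cI)\pi_\cI^{-1}$ so that the signs align exactly.
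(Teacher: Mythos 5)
Your proposal is correct and follows essentially the same route as the paper: both expand the determinant by the Leibniz formula, index the terms by a choice function $[n]\to[k]$ (equivalently the ordered partition $\cI$), set $J_a=\sigma(I_a)$, and factor the signature as $\epsilon(\cI,\cJ)$ times the product of block signatures before resumming each block into a minor determinant. Your explicit treatment of the sign identity via $\pi_\cI$ and $\pi_\cJ$ is just a more detailed rendering of the paper's decomposition $\sigma=\tau\circ\rho$ with $\tau$ the canonical reordering permutation and $\rho$ in the Young subgroup.
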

\begin{proof}
Consider the full expansion of the determinant
\bes
\det \Big( \sum_{a=1}^k M^{(a)} \Big)
&=
\sum_{\sigma \in \kS_n}
\epsilon(\sigma)
\prod_{i=1}^n 
\Big( \sum_{a=1}^k M^{(a)}_{i\, \sigma(i)} \Big)
\\
&=
\sum_{\sigma \in \kS_n}
\sum_{b \in [k]^n}
\epsilon(\sigma)
\prod_{i=1}^n 
M^{(b(i))}_{i\, \sigma(i)}
\end{split}
\ee
Associate to each pair $(\sigma,b)$ in the linear combination above, a
pair $(\cI,\cJ)$ of compatible partitions, through
\begin{align}
I_a 
&=
\{ i \; : \; b(i) = a 
\}
\ef;
&
J_a 
&=
\{ j \; : \; b(\sigma^{-1}(j)) = a 
\}
\ef.
\end{align}
So $\cI$ is determined by $b$ alone, and all the permutations $\sigma$
producing the same $\cJ$ can be written as the ``canonical''
permutation $\tau$,
that
reorders $(I_1, \ldots, I_k)$ into $(J_1, \ldots, J_k)$ with elements
within the blocks in order, acting from the left on a permutation 
$\rho = \prod_a \rho_a \in \kS_{I_1} \times \cdots \times \kS_{I_k}$.
The signature factorizes, 
$\epsilon(\sigma) = \epsilon(\tau) \prod_a \epsilon(\rho_a)$,
and $\epsilon(\tau) = \epsilon(\cI,\cJ)$ by definition, thus
\bes
\det \Big( \sum_{a=1}^k M^{(a)} \Big)
&=
\sum_{\substack{ \cI, \cJ \\ \cI \sim \cJ }}
\epsilon(\cI,\cJ)
\prod_a
\sum_{\rho_a \in \kS_{I_a}}
\epsilon(\rho_a)
\prod_{i\in I_a}
M^{(a)}_{i\; \tau \circ \rho_a(i)}
\end{split}
\ee
For each index $a$, the sum over the permutations $\rho_a$ produces
the appropriate determinant of the minor.
\end{proof}

\subsection{Bazin-Reiss-Picquet Theorem}

In this section we recall
the Bazin-Reiss-Picquet 
Theorem~\cite[pg.\ 193-195, \S 202-204]{cit.muir}.

Take a triplet of integers $m \geq n \geq p \geq 0$.
Call $S_{n,p}$ the
set of subsets of $[n]$, of cardinality $p$
(thus $|S_{n,p}| = \binom{n}{p}$). 
For a set $I \in S_{n,p}$, write $I=\{i_1,\ldots,i_p\}$
for the ordered list of elements.

Consider the $m \times n$ matrices of indeterminates $A$ and $B$, 
and the $m \times (m-n)$ matrix of indeterminates $C$. Write $(X|Y)$
for the matrix resulting from taking all the columns of $X$, followed
by all the columns of $Y$.

For a pair $(I,J) \in S_{n,p} \times S_{n,p}$ define $M^{I,J}$ as the
matrix
\be
M^{I,J}_{h,k} = 
\left\{
\begin{array}{ll}
A_{h,k}         &  k \leq n, \ k \not\in I; \\
B_{h,j_{\ell}}  &  k=i_{\ell}; \\
C_{h,k-n}       &  n < k \leq m; 
\end{array}
\right.
\ee
(that is, replace the columns $I$ of $(A|C)$ with the columns $J$ of
$B$, in order). Define $D_{I,J}=\det M^{I,J}$. Choose a total
ordering of $S_{n,p}$, and construct the matrix $D = \big( D_{I,J}
\big)_{I,J \in S_{n,p}}$, of dimension
$\binom{n}{p}$.
Then the compound determinant $\det D$ does not depend on the chosen
ordering, and has the following factorization property:

\begin{theorem}[Bazin-Reiss-Picquet]
\label{thm.bazin}
\be
\det D = 
\det (A|C)^{\binom{n-1}{p}} 
\;
\det (B|C)^{\binom{n-1}{p-1}}
\ef.
\ee
\end{theorem}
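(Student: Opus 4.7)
The plan is to reduce to the case $(A|C)=\mathrm{Id}_m$ via a change of basis, and then to invoke the classical Sylvester compound determinant identity: the determinant of the $\binom{n}{p}\times\binom{n}{p}$ matrix of all $p\times p$ minors of an $n\times n$ matrix $M$ equals $\det(M)^{\binom{n-1}{p-1}}$.

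Since $\det D$ is manifestly polynomial in the entries of $A,B,C$, it suffices to prove the identity generically, when $\det(A|C)\neq 0$. Independence of $\det D$ from the chosen ordering of $S_{n,p}$ is automatic, as relabeling permutes rows and columns of $D$ by the same permutation. Multiplying each $M^{I,J}$ on the left by $(A|C)^{-1}$ introduces a factor $\det(A|C)^{-1}$ into every $D_{I,J}$ and replaces $(A|C)$ by $\mathrm{Id}_m$. Let $\tilde B=(A|C)^{-1}B$ and let $B'$ denote its top $n\times n$ block; a Laplace expansion along the last $m-n$ rows of $(A|C)^{-1}(B|C)=(\tilde B\mid E_{m-n})$, with $E_{m-n}=(e_{n+1},\dots,e_m)$, yields $\det(B|C)=\det(A|C)\det(B')$.

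In the transformed $M^{I,J}$ the last $m-n$ columns are the standard basis vectors $e_{n+1},\dots,e_m$, so subtracting suitable multiples of these from the `replaced' columns indexed by $i_\ell\in I$ annihilates their entries below row $n$. Then $\det M^{I,J}$ reduces to the determinant of the top $n\times n$ block, which has $e_k$ at each column $k\notin I$ and the $j_\ell$-th column of $B'$ at column $i_\ell\in I$. A Laplace expansion along the rows $[n]\setminus I$ collapses to the single non-vanishing term (that of columns $S=[n]\setminus I$), producing $\det B'_{I,J}$ with a trivially positive sign, so $D_{I,J}=\det(A|C)\,\det B'_{I,J}$ exactly (no sign subtlety survives, as can also be seen at once by specializing $B=A$).

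Substituting this expression and applying Sylvester's identity to the compound matrix $(\det B'_{I,J})_{I,J\in S_{n,p}}$,
\begin{equation*}
\det D=\det(A|C)^{\binom{n}{p}}\det(B')^{\binom{n-1}{p-1}}=\det(A|C)^{\binom{n}{p}-\binom{n-1}{p-1}}\det(B|C)^{\binom{n-1}{p-1}},
\end{equation*}
which is the claim by Pascal's rule $\binom{n}{p}-\binom{n-1}{p-1}=\binom{n-1}{p}$. The main obstacle is really the invocation of Sylvester's compound determinant identity, which is classical but nontrivial; it can either be cited or reproved by the same change-of-basis strategy applied to $M$ in row-echelon form.
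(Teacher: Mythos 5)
Your proof is correct. Note first that the paper itself offers no proof of this theorem: it is recalled as a classical result with a citation to Muir (\S 202--204), so there is no internal argument to compare yours against. Your derivation is a clean and standard one: the reduction to $(A|C)=\mathrm{Id}_m$ is legitimate by genericity (polynomiality in the entries), the identity $\det(B|C)=\det(A|C)\det B'$ follows correctly from the Laplace expansion you indicate, and the computation $D_{I,J}=\det(A|C)\,\det B'_{I,J}$ checks out, including the sign (expanding $\det T$ over permutations, any $\sigma$ contributing must fix $[n]\setminus I$, and its sign equals that of its restriction to $I$, so one gets exactly the minor $\det B'_{I,J}$ with the rows of $I$ and columns of $J$ in increasing order, matching the ``in order'' convention of the statement). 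The exponent bookkeeping $\binom{n}{p}-\binom{n-1}{p-1}=\binom{n-1}{p}$ is right. The one substantive dependency is the Sylvester--Franke theorem on the determinant of the $p$-th compound, which carries essentially all the combinatorial content; since that result is itself classical (it also appears in Muir, and follows from Cauchy--Binet functoriality of the compound applied to a diagonal or triangular factorization, as you sketch), invoking it is no less rigorous than the paper's own practice of citing Bazin--Reiss--Picquet outright. In short: a valid proof, reducing one classical compound-determinant identity to another in a way that is arguably the ``textbook'' route.
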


\subsection{A divisibility corollary}
\label{ssec.divicoro}

A corollary of the Bazin-Reiss-Picquet Theorem is a divisibility
result for a special family of determinants. Take $m \geq n \geq k
\geq 0$.  Consider $m$ indeterminates $z_i$, $n$ indeterminates
$y_j$, and $2nk$ indeterminates $u_i^a$, $v_i^a$, with $1 \leq i \leq
n$ and $1 \leq a \leq k$ ($u_i^a$, $v_i^a$ may possibly be elements in
the polynomial ring $R(z,y)$).
Take $m$ polynomial functions $f_j(x)$, and introduce the associated
\emph{Slater determinant}, that is, the totally-antisymmetric polynomial
\be
P(\vec{x}) = 
P(x_1,\ldots,x_m)
=
\det \big( f_j(x_i) \big)_{1 \leq i,j \leq m}
\ef.
\ee
A typical example could be a shifted Vandermonde,
$P(x_1,\ldots,x_m) = \Delta_{\lambda}(x_1,\ldots,x_m)$ for
$\lambda$ a partition of length $m$
(see appendix \ref{sec.schur}).

Then we have
\begin{prop}
\label{prop.bazincoro}
The polynomial~\,$
\det \Big( \sum_{a=1}^k u_i^a v_j^a \;
P(\vec{z}_{\ssm i}, y_j)
\Big)_{1 \leq i,j \leq n}
$
is divisible by the polynomial
$\big( P(\vec{z}) \big)^{n-k}$.
\end{prop}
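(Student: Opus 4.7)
The plan is to combine Proposition~\ref{prop.minexp} (which splits the determinant of a sum of matrices into a sum of products of sub-determinants) with Theorem~\ref{thm.bazin}, the Bazin--Reiss--Picquet theorem, which will exhibit a large power of $P(\vec{z})$ inside each of the sub-determinants that arise.

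First I would write $M_{ij}:=\sum_{a=1}^{k}M^{(a)}_{ij}$ with $M^{(a)}_{ij}=u_{i}^{a}v_{j}^{a}\,P(\vec{z}_{\ssm i},y_{j})$ and apply Proposition~\ref{prop.minexp}. Factoring $u_{i}^{a}$ out of every row and $v_{j}^{a}$ out of every column of the block $M^{(a)}_{I_{a},J_{a}}$ reduces everything to controlling the polynomial
\[
\Lambda_{I,J}:=\det\bigl(P(\vec{z}_{\ssm i},y_{j})\bigr)_{i\in I,\,j\in J}
\qquad(|I|=|J|=p),
\]
and the heart of the argument becomes the claim that, for $p\geq 1$, $\Lambda_{I,J}$ is divisible by $(P(\vec{z}))^{p-1}$.

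To establish this I would apply Theorem~\ref{thm.bazin} in the ``smaller'' size in which Bazin's $n$ equals $p$ and Bazin's $p$ equals $1$, taking $A$ to be the $m\times p$ matrix whose columns are $(f_{k}(z_{i}))_{k}$ for $i\in I$ (in increasing order), $B$ the analogous matrix for $(f_{k}(y_{j}))_{k}$ with $j\in J$, and $C$ the $m\times(m-p)$ matrix carrying the remaining $z$-rows. Then $\det(A|C)=\pm P(\vec{z})$, and each Bazin entry $D_{\{\tilde i\},\{\tilde j\}}$ is a Slater determinant that coincides with $P(\vec{z}_{\ssm i_{\tilde i}},y_{j_{\tilde j}})$ up to a sign $\epsilon_{\tilde i}$ depending only on the row index $\tilde i$ (this sign records transposing $y_{j_{\tilde j}}$ from its Bazin position $\tilde i$ to the last slot of our convention, together with the fixed interleaving permutation of the $z$-columns). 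Hence $\det(D_{\{\tilde i\},\{\tilde j\}})_{\tilde i,\tilde j}=\pm\Lambda_{I,J}$, and Theorem~\ref{thm.bazin} yields $\Lambda_{I,J}=\pm(P(\vec{z}))^{p-1}\cdot P(y_{J},z_{[m]\ssm I})$, proving the claim.

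To finish, I would sum over compatible pairs $(\cI,\cJ)$: writing $p_{a}:=|I_{a}|=|J_{a}|$ and $k':=|\{a:p_{a}\geq 1\}|\leq k$, the product $\prod_{a}\Lambda_{I_{a},J_{a}}$ is divisible by $(P(\vec{z}))^{\sum_{a}\max(p_{a}-1,\,0)}=(P(\vec{z}))^{n-k'}$, hence \emph{a fortiori} by $(P(\vec{z}))^{n-k}$. Every term in the expansion of Proposition~\ref{prop.minexp} is therefore divisible by this power, and so is the full determinant. The main obstacle I expect is the sign bookkeeping needed to verify that the ratio $D_{\{\tilde i\},\{\tilde j\}}/P(\vec{z}_{\ssm i_{\tilde i}},y_{j_{\tilde j}})$ really depends only on $\tilde i$, so that the signs $\epsilon_{\tilde i}$ factor cleanly out of the sub-determinant row by row; once this is settled, invoking Bazin and the final divisibility count are routine.
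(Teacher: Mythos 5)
Your proposal is correct and follows essentially the same route as the paper: expand via Proposition~\ref{prop.minexp}, factor out the $u_i^a$ and $v_j^a$, apply Bazin--Reiss--Picquet to each block with parameters $(m,|I_a|,1)$ to extract $P(\vec{z})^{|I_a|-1}$, and sum the exponents. Your extra care with the signs $\epsilon_{\tilde i}$ and with empty blocks (giving $n-k'$ rather than $n-k$) is sound but not needed for the divisibility statement, which is all the paper claims.
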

\begin{proof}
Apply the formula for the minor expansion of a sum of matrices,
Proposition~\ref{prop.minexp}, to get
\bes
&
\det \Big( \sum_{a=1}^k u_i^a v_j^a 
P(\vec{z}_{\ssm i}, y_j)
\Big)_{1 \leq i,j \leq n}
\\
& 
\qquad
=
\sum_{\substack{ \cI, \cJ \\ \cI \sim \cJ }}
\epsilon(\cI,\cJ)
\!\!
\prod_{\substack{ 1 \leq a \leq k \\ i \in I_a}} 
\!\!\!
u_i^a
\prod_{\substack{ 1 \leq a \leq k \\ j \in J_a}} 
\!\!\!
v_j^a
\;
\prod_{a=1}^k
\det 
\big(
P(\vec{z}_{\ssm i}, y_j)
\big)_{i \in I_a, \; j \in J_a}
\ef.
\end{split}
\ee
Now apply the Bazin-Reiss-Picquet Theorem to each of the determinants,
with $(m,n,p) \to (m,|I_{\alpha}|,1)$, and get
\be
\det 
\big(
P(\vec{z}_{\ssm i}, y_j)
\big)_{i \in I_a, \; j \in J_a}
=
P(\vec{z})^{|I_a|-1}
P(\vec{z}_{\ssm I_a}, \vec{y}_{\ssm (J_a)^c})
\ef.
\ee
Thus we have
\bes
&
\det \Big( \sum_{a=1}^k u_i^a v_j^a 
P(\vec{z}_{\ssm i}, y_j)
\Big)_{1 \leq i,j \leq n}
\\
& 
\qquad
=
P(\vec{z})^{n-k}
\sum_{\substack{ \cI, \cJ \\ \cI \sim \cJ }}
\epsilon(\cI,\cJ)
\!\!
\prod_{\substack{ 1 \leq a \leq k \\ i \in I_a}} 
\!\!\!
u_i^a
\prod_{\substack{ 1 \leq a \leq k \\ j \in J_a}} 
\!\!\!
v_j^a
\;
\prod_{a=1}^k
P(\vec{z}_{\ssm I_a}, \vec{y}_{\ssm (J_a)^c})
\ef;
\end{split}
\ee
and the quantity in the sum on the right-hand side is a polynomial.
\end{proof}

\subsection{Vanishing and recursion properties of $2$-staircase Schur
  functions}\label{ssec.paul} 

Here we gather some relevant facts about the family of 2-staircase
Schur functions $s_{\lambda_{n,\ell,\ell'}}(\vec{z})$ introduced
in~(\ref{eq.ladderNL}). In this section we use $q$ as a synonym of
$\exp(\frac{2 \pi i}{\ell+2})$.

\begin{prop}[\emph{wheel condition}]
\label{thm.spinwheel}
For distinct $g$, $h$ and $k$ in $\{0,\ldots,\ell+1\}$, and 
distinct $i$, $j$ and $m$
in $\{1,\ldots,2 n\}$, 
\be
\label{wheel-eq123}
s_{\lambda_{n,\ell,\ell'}}(\vec{z}_{\ssm i j m}, q^{g} w, q^{h} w, q^{k} w)
=
0
\ef.
\ee
\end{prop}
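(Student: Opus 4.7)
The plan is to use the bi-alternant (Weyl character) formula
\[
s_{\lambda_{n,\ell,\ell'}}(x_1,\ldots,x_{2n}) = \frac{\det\bigl(x_i^{e_j}\bigr)_{1\le i,j\le 2n}}{\Delta(x_1,\ldots,x_{2n})},
\qquad e_j = (\lambda_{n,\ell,\ell'})_j + 2n-j,
\]
and to extract the key arithmetic from the definition of $\lambda_{n,\ell,\ell'}$. A direct computation from \eqref{eq.ladderNL} gives
\[
e_{2t-1} = (n-t)(\ell+2) + (\ell'+1), \qquad e_{2t} = (n-t)(\ell+2), \qquad t=1,\ldots,n.
\]
The crucial observation is that every $e_j$ is congruent modulo $\ell+2$ to one of the two values $0$ or $\ell'+1$. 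Since $q^{\ell+2}=1$, for any integer $\alpha$ we have $(q^\alpha w)^{e_j} = q^{\alpha\,\overline{e_j}}\,w^{e_j}$, where $\overline{e_j}\in\{0,\ell'+1\}$ denotes the residue.

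Using the symmetry of $s_{\lambda_{n,\ell,\ell'}}$ in its arguments, I may assume the three specialized positions are the last three, so that $x_{2n-2}, x_{2n-1}, x_{2n}$ equal $q^g w, q^h w, q^k w$. In the bi-alternant numerator, the three corresponding rows, once the factors $w^{e_j}$ are pulled out column-wise, become the vectors
\[
\vec{r}_\alpha = \bigl(q^{\alpha\,\overline{e_j}}\bigr)_{j=1,\ldots,2n}, \qquad \alpha\in\{g,h,k\}.
\]
Each such $\vec{r}_\alpha$ is the linear combination
\[
\vec{r}_\alpha = \chi^{(0)} + q^{\alpha(\ell'+1)}\,\chi^{(\ell'+1)}
\]
of the two indicator vectors $\chi^{(0)}_j = [\overline{e_j}=0]$ and $\chi^{(\ell'+1)}_j = [\overline{e_j}=\ell'+1]$. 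Three vectors in a two-dimensional subspace are linearly dependent, so the numerator determinant vanishes identically.

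Finally, to translate the vanishing of the numerator into vanishing of $s_{\lambda_{n,\ell,\ell'}}$ itself, observe that the Schur polynomial is a polynomial in its variables, and that for generic $w\neq 0$ the three roots of unity $q^g,q^h,q^k$ are distinct (the integers $g,h,k\in\{0,\ldots,\ell+1\}$ being distinct), so the denominator $\Delta$ of the evaluation point is nonzero. The identity therefore holds for generic values of $w$ and the remaining $z$'s, hence as a polynomial identity in all variables. The only real work is the bookkeeping of the exponents $e_j$; once the residue classes $\{0,\ell'+1\}\pmod{\ell+2}$ appear, the rank-two reduction is immediate and the proof concludes with no additional obstacle.
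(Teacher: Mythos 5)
Your proof is correct and follows essentially the same route as the paper's: the paper proves the general $(m,\ell)$-wheel condition for $m$-staircase Schur functions (Appendix B, Proposition \ref{prop1.paulM}) by exactly this mechanism — writing the Schur function as $\Delta_\lambda/\Delta$, noting that the shifted exponents take only $m$ residues modulo $\ell+m$, and concluding that the $m+1$ specialized rows of the alternant numerator span a space of dimension at most $m$. Your argument is the $m=2$ instantiation of that proof, with the two residue classes $\{0,\ell'+1\}$ playing the role of the paper's $m$ values $b+\lambda'_{m-b}$.
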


\begin{prop}[\emph{recursion relation}]
\label{thm.spinkore}
For $k$ in $\{1,\ldots,\ell+1\}$, and $i$, $j$
in $\{1,\ldots,2 n\}$, distinct,
\be
\label{rec-schur}
s_{\lambda_{n,\ell,\ell'}}(\vec{z}_{\ssm i j}, w, q^{k} w)
=w^{\ell'}
U_{\ell'}(1, q^{k})
\prod_{\substack{
1 \leq m \leq 2 n \\
m \neq i,j }}
\frac{U_{\ell+1}(z_m, w)}{z_m - q^{k} w}
~s_{\lambda_{n-1,\ell,\ell'}}(\vec{z}_{\ssm i j})
\ef.
\ee
\end{prop}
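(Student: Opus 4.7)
The plan is to combine the wheel condition (Proposition~\ref{thm.spinwheel}) with a double $w$-degree bound, and to identify the residue via the semistandard Young tableau (SSYT) expansion.

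\textbf{Divisibility.} First I would view the LHS as a polynomial in $(\vec{z}_{\ssm ij},w)$. For each $m\notin\{i,j\}$ and each $k'\in\{1,\dots,\ell+1\}\setminus\{k\}$ the three multiples $w=q^0w$, $q^kw$, $q^{k'}w$ are pairwise distinct, so Proposition~\ref{thm.spinwheel} forces the LHS to vanish at $z_m=q^{k'}w$; hence $(z_m-q^{k'}w)$ divides the LHS. These linear forms are pairwise coprime as $(m,k')$ varies, and the factorization $z^{\ell+2}-w^{\ell+2}=\prod_{k''=0}^{\ell+1}(z-q^{k''}w)$ gives $U_{\ell+1}(z,w)=\prod_{k''=1}^{\ell+1}(z-q^{k''}w)$, so the total product is exactly $\prod_{m\neq i,j}U_{\ell+1}(z_m,w)/(z_m-q^kw)$. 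Accordingly, write the LHS as $Q(\vec{z}_{\ssm ij},w)$ times this divisor.

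\textbf{Shape of the quotient.} The divisor has $w$-degree $(2n-2)\ell$, so $w$-degree bounds on the LHS transfer to $Q$. In the SSYT expansion of $s_{\lambda_{n,\ell,\ell'}}$, the boxes with entries in $\{2n-1,2n\}$ lie at the bottom of their columns (column-strictness) and form a skew sub-shape with columns of length $\le 2$; its maximum size is $\lambda_1+\lambda_2=2(n-1)\ell+\ell'$, yielding $\deg_w Q\le\ell'$. Dually, the boxes with entries in $\{1,\dots,2n-2\}$ form a subpartition $\mu\subseteq\lambda_{n,\ell,\ell'}$ of length $\le 2n-2$, of size at most $|\lambda_{n,\ell,\ell'}|-\ell'$, so the minimum $w$-degree of the LHS is $\ge\ell'$. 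Since the divisor at $w=0$ equals $\prod_m z_m^\ell\ne 0$, the factor $w^{\ell'}$ must already divide $Q$; combined with $\deg_w Q\le\ell'$, this forces $Q=w^{\ell'}R(\vec{z}_{\ssm ij})$, with $R$ independent of $w$ and symmetric in $\vec{z}_{\ssm ij}$.

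\textbf{Identification of $R$.} Finally I would extract the coefficient of $w^{\ell'}$ from both sides. On the LHS, the minimum-weight SSYT take $\mu=(\lambda_1,\dots,\lambda_{2n-2})$ as their sub-partition on entries $\le 2n-2$, and fill row $2n-1$ with a prefix of $a$ copies of $2n-1$ and a suffix of $\ell'-a$ copies of $2n$; summing the weight $q^{k(\ell'-a)}w^{\ell'}$ over $a\in\{0,\dots,\ell'\}$ yields $U_{\ell'}(1,q^k)\,w^{\ell'}$. The sub-shape $\mu$ coincides with $\lambda_{n-1,\ell,\ell'}+(\ell^{2n-2})$, so the rectangle-addition identity gives $s_{\mu}(\vec{z}_{\ssm ij})=\prod_m z_m^\ell\cdot s_{\lambda_{n-1,\ell,\ell'}}(\vec{z}_{\ssm ij})$. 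The $w^{\ell'}$-coefficient of the RHS is $R\cdot\prod_m z_m^\ell$, so matching produces $R=U_{\ell'}(1,q^k)\,s_{\lambda_{n-1,\ell,\ell'}}(\vec{z}_{\ssm ij})$, as claimed. The most delicate step is the double $w$-degree bound in the second paragraph; all other ingredients (wheel condition, rectangle identity, $U_h$ as a geometric sum) are standard, and the same argument covers the degenerate case $U_{\ell'}(1,q^k)=0$, in which $Q$ is forced to vanish identically and both sides are zero.
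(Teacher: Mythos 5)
Your proof is correct, and its skeleton --- the wheel condition forces divisibility by $\prod_{m\neq i,j}U_{\ell+1}(z_m,w)/(z_m-q^kw)$, a two-sided $w$-degree count pins the quotient to $w^{\ell'}R(\vec z_{\ssm ij})$, and the lowest-order coefficient in $w$ identifies $R$ --- is exactly the paper's, which proves the general $m$-staircase recursion (Proposition \ref{thm.paulM} in Appendix \ref{app.paul}) and obtains Proposition \ref{thm.spinkore} as the case $m=2$, $\lambda'=(\ell',0)$, $K=\{0,k\}$. The difference lies in the technique used for the second and third steps: where you bound the $w$-degrees and extract the $w^{\ell'}$-coefficient combinatorially, via the branching rule $s_\lambda(\vec x,\vec y)=\sum_\mu s_\mu(\vec x)\,s_{\lambda/\mu}(\vec y)$ and the rectangle-addition identity, the paper reads the degree bounds off the bialternant representation $\Delta_\lambda/\Delta$ and identifies the coefficient by dividing by $w^{\ell'}$, letting $w\to0$, and invoking its splitting formula (Proposition \ref{prop.limschur}, proven by Laplace expansion of the shifted Vandermonde). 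The two identifications are literally the same computation in different clothing: your sum over minimal-weight fillings of the row of length $\ell'$ is the SSYT proof of $s_{(\ell',0)}(1,q^k)=U_{\ell'}(1,q^k)$ appearing in the paper's limit. Your route is more elementary and self-contained for $m=2$ and handles the degenerate case $U_{\ell'}(1,q^k)=0$ cleanly; the paper's determinantal route pays off by delivering the full $m$-staircase statement in one stroke, which it needs elsewhere in Appendix \ref{app.paul}.
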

\noindent
Propositions \ref{thm.spinwheel} and \ref{thm.spinkore} are
occurrences, already known in the literature
(cf.\ e.g.\ \cite[Thm.~4]{cit.pzj_hs}), of vanishing conditions (and
related recursion properties) within a broad family, for which the
name ``wheel condition'' is often used. There has been a recent
interest in the investigation of the structure of the corresponding
ideals, in the ring of symmetric polynomials (see
e.g.\ \cite{cit.FJMM2, cit.FJMM}).

We prove the propositions above in Appendix~\ref{app.paul}. More
precisely, in the appendix we generalize $2$-staircase Schur functions
to the \emph{$m$-staircase} case, and prove the appropriate
generalizations of the propositions above, together with some further
properties of potential future interest.

Notice that, if $\gcd(\ell'+1,\ell+2)=g>1$, then there exists some 
$1 \leq k \leq \ell+1$ such that $q^k$ is a root of $U_{\ell'}(1,x)$ 
(e.g., $k=(\ell+2)/g$).
Then it follows from equation (\ref{rec-schur}) that
$s_{\lambda_{n,\ell,\ell'}}$ vanishes if $z_i=q^k z_j$, i.e.\ it is
divisible by $z_i - q^k z_j$. On the
contrary, if $\gcd(\ell'+1,\ell+2)=1$, one has
the following proposition


\begin{prop}
\label{unfact}
Suppose $\gcd(\ell'+1,\ell+2)=1$ and $n\geq 2$, then
$s_{\lambda_{n,\ell,\ell'}}$ has no factors of the form  $(z_i-\eta z_j)$,
for any $1\leq i,j\leq 2n$ and $\eta \in\mathbb{C}$.
\end{prop}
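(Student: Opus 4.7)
The plan is to assume, for contradiction, that $(z_i-\eta z_j)$ divides $s_{\lambda_{n,\ell,\ell'}}(\vec{z})$ for some $i\neq j$ and $\eta\in\mathbb{C}$, and to exhibit a specialisation at which $s_{\lambda_{n,\ell,\ell'}}$ is manifestly nonzero. Since $s_{\lambda_{n,\ell,\ell'}}$ is $\kS_{2n}$-symmetric, a single such factor propagates to every pair, so I may fix $(i,j)=(1,2)$. The trivial possibilities $\eta=0$ (which would force $z_1\mid s_{\lambda_{n,\ell,\ell'}}$, impossible since $\lambda_{n,\ell,\ell'}$ has length $2n-1<2n$) and $\eta=1$ (contradicted by $s_{\lambda_{n,\ell,\ell'}}(1,\ldots,1)\neq 0$) can be dismissed at once.

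The heart of the partial proof covers $\eta\in\{q,q^2,\ldots,q^{\ell+1}\}$, with $q=\exp(2\pi i/(\ell+2))$. Here I would invoke the recursion of Proposition \ref{thm.spinkore} with $i=1$, $j=2$, and $w=z_1$:
\begin{equation*}
s_{\lambda_{n,\ell,\ell'}}(\vec{z}_{\ssm 1 2},\,z_1,\,q^k z_1)
= z_1^{\ell'}\,U_{\ell'}(1,q^k)\prod_{m=3}^{2n}\frac{U_{\ell+1}(z_m,z_1)}{z_m-q^k z_1}\,s_{\lambda_{n-1,\ell,\ell'}}(\vec{z}_{\ssm 1 2}).
\end{equation*}
The gcd hypothesis is precisely what ensures $U_{\ell'}(1,q^k)=(1-q^{k(\ell'+1)})/(1-q^k)\neq 0$ for every $k\in\{1,\ldots,\ell+1\}$; each Chebyshev ratio $U_{\ell+1}(z_m,z_1)/(z_m-q^k z_1)$ equals $\prod_{k'\neq k}(z_m-q^{k'}z_1)$, a nonzero polynomial in view of the root factorisation $U_{\ell+1}(x,y)=\prod_{k'=1}^{\ell+1}(x-q^{k'}y)$; and $s_{\lambda_{n-1,\ell,\ell'}}$ is a nonzero polynomial in the $2n-2\geq 2$ remaining variables because $n\geq 2$. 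Thus the right-hand side does not vanish identically, contradicting the assumed identical vanishing of the left-hand side. This rules out every $\eta$ in $\{q,q^2,\ldots,q^{\ell+1}\}$.

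What remains is the range $\eta\notin\{0,1\}\cup\{q,q^2,\ldots,q^{\ell+1}\}$, and this is where I expect the main obstacle, and the reason the paper describes the present argument as only partial. Neither the wheel condition (Proposition \ref{thm.spinwheel}) nor the recursion above specialises at such a generic $\eta$, so a different tool is needed. Two plausible routes: (i) exploit that the set of $\eta$ yielding a linear factor of a polynomial of fixed degree is finite, and rule the remaining candidates out individually through numerical specialisations of $z_3,\ldots,z_{2n}$ combined with the bialternant formula; (ii) attempt a global irreducibility argument on the Jacobi--Trudi determinantal expression of $s_{\lambda_{n,\ell,\ell'}}$. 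Either approach is substantially more delicate than the recursion-based argument above, and pursuing it would constitute the "full proof" alluded to in the main text.
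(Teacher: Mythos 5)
Your argument has a genuine gap: it only excludes $\eta\in\{0,1\}\cup\{q,\ldots,q^{\ell+1}\}$, whereas the proposition asserts the absence of a factor $z_i-\eta z_j$ for \emph{every} $\eta\in\mathbb{C}$, and the generic $\eta$ is precisely the case that matters. The idea you are missing is to apply the recursion of Proposition \ref{thm.spinkore} at a pair of variables \emph{disjoint} from the pair carrying the putative factor, and to run an induction on $n$. Concretely: assume $z_i-\eta z_j$ divides $s_{\lambda_{n,\ell,\ell'}}$, pick distinct $h,k\notin\{i,j\}$ (possible since $n\geq2$), and specialize $z_k=q z_h$. The binomial $z_i-\eta z_j$ is untouched by this specialization, so it must divide the specialized polynomial, which by (\ref{rec-schur}) equals
\begin{equation*}
z_h^{\ell'}\,U_{\ell'}(1,q)\prod_{m\neq h,k}\frac{U_{\ell+1}(z_m,z_h)}{z_m-q z_h}\;
s_{\lambda_{n-1,\ell,\ell'}}(\vec{z}_{\ssm h k}).
\end{equation*}
The explicit prefactors are a nonzero constant (this is where the gcd hypothesis enters, exactly as in your computation), the monomial $z_h^{\ell'}$, and binomials $z_m-q^{k'}z_h$, none of which is proportional to $z_i-\eta z_j$ because $h\notin\{i,j\}$; by unique factorization, $z_i-\eta z_j$ must therefore divide $s_{\lambda_{n-1,\ell,\ell'}}$, contradicting the inductive hypothesis. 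The base case $n=2$ is settled by direct inspection of the bialternant (the specialization $\Delta_{\lambda_{2,\ell,\ell'}}(z,\eta z,0,1)$ is not identically zero when $\gcd(\ell'+1,\ell+2)=1$). This handles arbitrary $\eta$ in one stroke, so neither of your proposed routes (i) and (ii) is needed. A secondary point: the word ``partial'' in the text preceding the proposition refers to the fact that excluding factors of the form $z_i-\eta z_j$ falls short of full irreducibility of $s_{\lambda_{n-1,\ell,\ell'}}$; it does not signal any incompleteness in the proof of Proposition \ref{unfact} itself, which the paper gives in full.
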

\begin{proof}
We prove the statement by induction on $n$.  The case $n=2$ is done by
direct inspection of $s_{\lambda_{n,\ell,\ell'}}$\;\footnote{%
E.g.,
  realize that, for $z_1-\eta z_2$ to divide the Schur function, it
  should divide the shifted Vandermonde at numerator, with a higher
  power w.r.t.\ the ordinary Vandermonde at denominator. The case
  $\eta=1$ is easily ruled out (even if we further specialize $z_3=z$,
  $z_4=0$, we obtain $s_{\lambda_{2,\ell,\ell'}}(z,z,z,0) =
  z^{2(\ell+\ell')} (\ell+2)(\ell'+1)(\ell-\ell'+1)/2$,
which is not identically zero as we have $\ell \geq 0$ and $0 \leq
\ell' \leq \ell$). For $\eta \neq 1$ we
  can have no simplifications with the Vandermonde at denominator, and
  it suffices to analyse the shifted Vandermonde, which gives
\[
\Delta_{\lambda_{2,\ell,\ell'}}(z,\eta z,0,1) =
  z^{\ell+\ell'+3} \big( ((\eta z)^{\ell+2}-1) (\eta^{\ell'+1}-1) - 
  ((\eta z)^{\ell'+1}-1) (\eta^{\ell+2}-1) \big)
\ef.
\]
Again, this is not identically zero, as, for the $\gcd$ hypothesis,
$\eta^{\ell'+1}-1$ and $\eta^{\ell+2}-1$ cannot vanish
simultaneously.}.  Now suppose the statement true up to $n-1$ and
assume that there exists $i,j \in \{1,\ldots, 2n\}$ and $\eta \in
\mathbb{C}$ such that $(z_i-\eta z_j)$ divides
$s_{\lambda_{n,\ell,\ell'}}$. Then take $k$ and $h$ distinct indices
in $\{1, \ldots, 2n\} \ssm \{i,j\}$ (note that we need $n \geq 2$ at
this point), and specialize $s_{\lambda_{n,\ell,\ell'}}|_{z_k=q
  z_h}$. The linear term $z_i-\eta z_j$ must divide also the
specialized polynomial, and, using the recursion relation of
Proposition \ref{thm.spinkore}, it must divide the corresponding
right-hand--side expression for (\ref{rec-schur}). However, this
expression is non-zero for the other variables $z_m$ being generic
(because the only potentially dangerous factor, $U_{\ell'}(1, q^{k})$,
may vanish only if $\gcd(\ell'+1,\ell+2)>1$), and the factors of the
form $z_k^{\ell'}$, and $U_{\ell+1}(z_m, z_k)$,
for $m \neq k,h$, do not contain $z_i-\eta z_j$ as a factor. Thus
$z_i-\eta z_j$ must divide $s_{\lambda_{n-1,\ell,\ell'}}$,
this being in contrast with the inductive assumption.
\end{proof}

\section{Proof of Theorem 2}
\label{sec.pf}

\noindent
As outlined in the introduction, our strategy for proving Theorem
\ref{th2} will be as follows: let us call $\psi_{n,\ell,\ell'}(z,x,y)$
the left-hand side of (\ref{eq1}); first we identify several
polynomial factors of $\psi_{n,\ell,\ell'}(z,x,y)$; then we show that
these factors are relatively prime and that their product exhausts the
degree of $\psi_{n,\ell,\ell'}(z,x,y)$; finally, we determine the
overall constant factor. As in the previous subsection, also in this
section we set $q=e^{\frac{2 \pi i}{\ell+2}}$.

\subsection{Polynomial factors of $\psi_{n,\ell,\ell'}(\vec z,\vec x,\vec y)$}
\label{ssec.divisehard}

We start by identifying a polynomial factor of
$\psi_{n,\ell,\ell'}(\vec z,\vec x,\vec y)$ whose
factorization
involves only monomials and binomials.  By virtue of Lemma
\ref{lemma}, we have that $\psi_{n,\ell,\ell'}(\vec z,\vec x,\vec y)$
is divisible by $\Delta(\vec{x})$ and $\Delta(\vec{y})$. Since the
degree of $\psi_{n,\ell,\ell'}$ in each variable $x_i$ or $y_i$
separately is $(n-1)\ell+\ell'$, which is the same as the degree of
$\Delta(\vec{x}) \Delta(\vec{y})$, the quotient is a polynomial of
degree zero in $x_i$ and $y_j$ (namely, it is the determinant of the
matrix of coefficients in $x$ and $y$ of $s_{\lambda_{n,\ell,\ell'}}
(\vec{z},x,y)$).
Call $Q_{n,\ell,\ell'}(\vec z)$ the resulting quotient
\be
\label{def-Q}
Q_{n,\ell,\ell'}(\vec z) =
\frac{\psi_{n,\ell,\ell'}(\vec z,\vec x,\vec y)}{\Delta(\vec{x})\Delta(\vec{y})} 
\ee
We work out immediately the case of Theorem \ref{th2}
corresponding to the second case of equation (\ref{eq.cTh2})
\begin{prop}
\label{gcd}
If $\gcd(\ell'+1, \ell+2) > 1$ and $n\geq 2$, 
then $Q_{n,\ell,\ell'}(\vec z)=0$
\end{prop}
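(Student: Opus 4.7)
The plan is to exploit the divisibility properties of $s := s_{\lambda_{n,\ell,\ell'}}$ that come from the wheel condition when $g := \gcd(\ell'+1,\ell+2) > 1$, and then to read off $\det P = 0$ from a rank-deficient factorization of its coefficient matrix.

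\smallskip

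Setting $\zeta := e^{2\pi i/g}$, the observation just before Proposition \ref{unfact} (applied to any pair of the $2n$ arguments of $s$) shows that $s(\vec z,x,y)$ is divisible by $U_{g-1}(z_i,x) = \prod_{j=1}^{g-1}(z_i - \zeta^j x)$ for each $i = 1,\ldots,2n-2$, and likewise by $U_{g-1}(z_i,y)$. Each of these decomposes into linear forms in a distinct pair of the variables $\{z_1,\ldots,z_{2n-2},x,y\}$, so they are pairwise coprime irreducibles in $\mathbb{C}[\vec z,x,y]$, and therefore their product divides $s$. This yields
$$s(\vec z,x,y) \;=\; F(\vec z,x)\,F(\vec z,y)\,S(\vec z,x,y),$$
where $F(\vec z,x) := \prod_{i=1}^{2n-2} U_{g-1}(z_i,x)$ has degree $d_F := 2(n-1)(g-1)$ in $x$, and $S$ is a polynomial of degree at most $N-1-d_F$ in each of $x,y$.

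\smallskip

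By Lemma \ref{lemma} and the definition (\ref{def-Q}), $Q_{n,\ell,\ell'}(\vec z) = \det P$, where $P$ is the $N \times N$ coefficient matrix $P_{ij} = [x^{i-1}y^{j-1}]\, s(\vec z,x,y)$. Expanding coefficients in the factorization above gives $P_{ij} = \sum_{a,b} F_a F_b\, S_{i-a,\,j-b}$, which rearranges into the matrix identity
$$P \;=\; M_F\, \tilde S\, M_F^{T},$$
where $M_F \in \mathbb{C}[\vec z]^{N \times (N-d_F)}$ is the Toeplitz-type matrix with entries $(M_F)_{ij} = F_{i-j}$ (representing polynomial multiplication by $F(\vec z,x)$), and $\tilde S$ is the $(N-d_F) \times (N-d_F)$ coefficient matrix of $S$.

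\smallskip

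The right-hand side factors through a space of dimension $N-d_F$ over $\mathbb{C}(\vec z)$, so $\mathrm{rank}(P) \leq N-d_F$. Since $n \geq 2$ and $g \geq 2$, $d_F \geq 2 > 0$, whence $\mathrm{rank}(P) < N$ and $\det P = 0$, as required. The main delicate point is the simultaneous divisibility by all $2(2n-2)$ factors $U_{g-1}(z_i,\cdot)$; this is justified by the pairwise coprimality observation, since each factor is a product of linear forms in a different pair of variables, none of which can coincide.
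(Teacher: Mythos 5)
Your proof is correct, but it takes a genuinely different route from the paper's. The paper exploits the fact that $Q_{n,\ell,\ell'}$, defined by (\ref{def-Q}) as a ratio, is independent of $\vec x$ and $\vec y$: it specializes a single $x_1 = q^k z_1$ with $k=(\ell+2)/g$, so that the factor $U_{\ell'}(1,q^k)=0$ in the recursion of Proposition \ref{thm.spinkore} kills the entire first row of $\big(s_{\lambda_{n,\ell,\ell'}}(\vec z,x_i,y_j)\big)$, while the Vandermonde factors stay nonzero for generic remaining variables --- a two-line argument. You instead stay with the coefficient matrix $P$ and upgrade the single vanishing hyperplane to the full divisibility of $s_{\lambda_{n,\ell,\ell'}}(\vec z,x,y)$ by $\prod_i U_{g-1}(z_i,x)\,U_{g-1}(z_i,y)$ (your coprimality justification of the simultaneous divisibility is the right one, since $\zeta^j\ne 1$ for $1\le j\le g-1$, and the needed degree inequality $2(n-1)(g-1)\le N-1$ does hold because $g\le(\ell+2)/2$ --- though even if $S$ were forced to be zero the conclusion would follow trivially), and then read off $P = M_F\,\tilde S\,M_F^{T}$ with an inner dimension $N-d_F<N$. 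This costs more work but buys more: it gives the quantitative bound $\mathrm{rank}(P)\le N-2(n-1)(g-1)$ rather than mere vanishing of $\det P$, and it does not use the independence of $Q$ from $\vec x,\vec y$. Two cosmetic points: with $P_{ij}=[x^{i-1}y^{j-1}]s$ the convolution should read $P_{ij}=\sum_{a,b}F_aF_b\,S_{i-1-a,\,j-1-b}$, and you should note explicitly that all $g-1$ values $k=j(\ell+2)/g$, $j=1,\dots,g-1$, satisfy $U_{\ell'}(1,q^k)=0$ (the remark before Proposition \ref{unfact} exhibits only one such $k$, but its argument gives all of them).
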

\begin{proof}
Say $\gcd(\ell'+1, \ell+2) = g > 1$. It follows that the polynomials
$U_{\ell'}(1,x)$ and $U_{\ell+1}(1,x)$ have a common root $q^k$, for
$k=(\ell+2)/g$.  We can exploit the fact that $Q$, defined in equation
(\ref{def-Q}) as a rational function of the $z$, $x$ and $y$'s, is
actually independent from the $x$ and $y$'s. In particular, we can
choose $x_1=q^k z_1$ (and leave $x_2, \ldots, x_n, y_1, \ldots, y_n$
generic).  Consider the matrix 
$M_{ij}=s_{\lambda_{n,\ell,\ell'}}(\vec{z},x_i,y_j)$, whose
determinant is $\psi_{n,\ell,\ell'}$.
By applying the recursion relation of Proposition \ref{thm.spinkore}
we see that the row corresponding to $x_1$ vanishes identically. On
the other side, as the remaining $x$ and $y$ variables are generic,
the Vandermonde factors are non-zero. As a consequence,
$Q_{n,\ell,\ell'}(\vec z)=0$.
\end{proof}

\noindent
We proceed to find other factors of $Q_{n,\ell,\ell'}$, for the
relevant case of equation (\ref{eq.cTh2}).
\begin{prop}
\label{powers}
For $n \geq 2$, $s_{\mu_{2n-2,\ell+1}}^{\ell}(\vec{z})
\Big( \prod_{i=1}^{2n-2} z_i^{\ell'(\ell+1)} \Big)$ divides
$Q_{n,\ell,\ell'}(\vec z)$. 
\end{prop}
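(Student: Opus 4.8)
The plan is to exhibit the two announced families of factors of $\psi_{n,\ell,\ell'}(\vec z,\vec x,\vec y)$ separately and then combine them. Since $s_{\mu_{2n-2,\ell+1}}^{\ell}(\vec z)$ and $\prod_i z_i^{\ell'(\ell+1)}$ involve only the variables $\vec z$, they are coprime to $\Delta(\vec x)\Delta(\vec y)$; thus it suffices to prove that each divides $\psi_{n,\ell,\ell'}$, and to check that the corresponding irreducible factors are pairwise non-proportional, so that their product divides $\psi_{n,\ell,\ell'}$ and hence $Q_{n,\ell,\ell'}$. For the monomial factor I shall work with the coefficient-matrix reformulation (\ref{eq1powers}), writing $Q_{n,\ell,\ell'}(\vec z)=\det P$ with $P=\big(s_{\lambda_{n,\ell,\ell'}}(\vec z,x,y)|_{[x^{i}y^{j}]}\big)_{0\le i,j\le N-1}$.

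For the Chebyshev factor, fix $1\le a<b\le 2n-2$ and $k\in\{1,\dots,\ell+1\}$, and set $t=z_a-q^{k}z_b$. I specialise $z_a=q^{k}z_b$ in the matrix $M_{ij}=s_{\lambda_{n,\ell,\ell'}}(\vec z,x_i,y_j)$ and apply the recursion of Proposition \ref{thm.spinkore} to the two arguments sitting in positions $a$ and $b$ (the function is symmetric, so this is legitimate). The entry factorises as
\be
M_{ij}|_{z_a=q^{k}z_b}=C(\vec z)\,\alpha_i\,\beta_j\,
s_{\lambda_{n-1,\ell,\ell'}}(\vec z_{\ssm ab},x_i,y_j),
\ee
where $C(\vec z)$ is independent of $i,j$, while $\alpha_i=U_{\ell+1}(x_i,z_b)/(x_i-q^{k}z_b)$ and $\beta_j=U_{\ell+1}(y_j,z_b)/(y_j-q^{k}z_b)$ depend on a single index. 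Now $s_{\lambda_{n-1,\ell,\ell'}}(\vec z_{\ssm ab},x,y)$ has degree at most $(n-2)\ell+\ell'=N-\ell-1$ in each of $x,y$, so by the factorisation $V(\vec x)^{\mathrm{T}}\tilde P\,V(\vec y)$ underlying Lemma \ref{lemma} the specialised matrix has rank at most $N-\ell$. Finally, if a matrix $M(t)$ satisfies $\mathrm{rank}\,M(0)\le N-r$ then $t^{r}\mid\det M(t)$; applying this with $r=\ell$ gives $(z_a-q^{k}z_b)^{\ell}\mid\psi_{n,\ell,\ell'}$. Letting $(a,b,k)$ range and using (\ref{eq.smu}) we obtain $s_{\mu_{2n-2,\ell+1}}^{\ell}(\vec z)\mid\psi_{n,\ell,\ell'}$.

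For the monomial factor, by symmetry it suffices to prove $z_c^{\ell'(\ell+1)}\mid Q_{n,\ell,\ell'}=\det P$ for a single $c$. Each coefficient $P_{ij}$ is a symmetric polynomial in $\vec z$, and by the branching rule
\be
P_{ij}=\sum_{\mu}\big(s_{\lambda_{n,\ell,\ell'}/\mu}(x,y)|_{[x^{i}y^{j}]}\big)\,s_{\mu}(\vec z).
\ee
Using that the minimal $z_c$-degree of $s_{\mu}(\vec z)$ (in $2n-2$ variables) equals its smallest part $\mu_{2n-2}$, we see that $z_c^{d_{ij}}\mid P_{ij}$ with $d_{ij}=\min\{\mu_{2n-2}\}$ over the admissible $\mu$. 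Every term of $\det P=\sum_{\sigma}\pm\prod_i P_{i\sigma(i)}$ is then divisible by $z_c^{\sum_i d_{i\sigma(i)}}$, whence $z_c^{D}\mid\det P$ with $D=\min_{\sigma}\sum_i d_{i\sigma(i)}$.

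The hard part is the combinatorial evaluation $D\ge\ell'(\ell+1)$: one must determine the pattern of the vanishing orders $d_{ij}$ over the $N\times N$ array and verify that no transversal achieves a smaller total cost — a minimum-cost assignment problem whose optimum I expect to be exactly $\ell'(\ell+1)$ (as one checks directly for small $n$, e.g.\ $n=2$, where the optimal transversal costs $2=\ell'(\ell+1)$). Granting this, both divisibilities hold; since the linear forms $z_a-q^{k}z_b$ (for $1\le a<b\le 2n-2$, $1\le k\le\ell+1$) are pairwise non-proportional and none is proportional to a monomial $z_c$, their product $s_{\mu_{2n-2,\ell+1}}^{\ell}(\vec z)\prod_{i=1}^{2n-2}z_i^{\ell'(\ell+1)}$ divides $\psi_{n,\ell,\ell'}$, and being coprime to $\Delta(\vec x)\Delta(\vec y)$ it divides $Q_{n,\ell,\ell'}$.
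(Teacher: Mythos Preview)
Your argument for the Chebyshev factor $s_{\mu_{2n-2,\ell+1}}^{\ell}(\vec z)$ is correct and genuinely different from the paper's. Where the paper exploits the freedom in the $x$--variables (setting $x_i=q^{i}z_1$ for $i=1,\dots,\ell+1$ and using that $Q$ is independent of $\vec x$), you instead specialise a pair of $z$--variables and use the resulting rank drop of the matrix $M$. The key lemma you invoke --- that corank $\ge r$ at $t=0$ forces $t^{r}\mid\det M(t)$ --- is valid (over the DVR $\mathbb C[\vec z,\vec x,\vec y]_{(t)}$ with $t=z_a-q^{k}z_b$, lift invertible row/column operations making the last $r$ rows of $\bar M$ zero; then those rows of $M$ lie in $tR^{N}$ and $t^{r}$ factors out of $\det$). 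Your degree count $(n-2)\ell+\ell'=N-1-\ell$ is right, so the specialised matrix indeed has rank $\le N-\ell$. This route is arguably cleaner for the $U_{\ell+1}$ factors taken in isolation.

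However, your treatment of the monomial factor $\prod_i z_i^{\ell'(\ell+1)}$ has a genuine gap, which you yourself flag (``Granting this''). Reducing to the minimum--cost assignment $\min_\sigma\sum_i d_{i\sigma(i)}\ge\ell'(\ell+1)$ is not a proof until that bound is established, and it is not clear that termwise $z_c$--orders $d_{ij}$ of the coefficient matrix carry enough information: the determinant can vanish to higher order than any transversal predicts, and conversely there is no a priori reason the combinatorics of the $d_{ij}$ forces the required lower bound for general $(\ell,\ell')$. Your aside ``for $n=2$ the optimal transversal costs $2=\ell'(\ell+1)$'' only makes sense for one specific pair $(\ell,\ell')$, not in general.

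The paper avoids this difficulty entirely: the \emph{same} specialisation $x_i=q^{i}z_1$ for $1\le i\le\ell+1$ that exposes the $U_{\ell+1}$ factors also exposes the monomial factor, because each of the first $\ell+1$ rows of $M$ carries an explicit prefactor $z_1^{\ell'}$ coming from the recursion (\ref{rec-schur}); their product is $z_1^{\ell'(\ell+1)}$. One then checks that dividing by $\Delta(\vec x)$ (which after substitution acquires factors $q^{i}z_1-q^{i'}z_1$) does not reintroduce powers of $z_1$ in the denominator. This is why the paper treats the two families of factors together rather than separately. Your rank--drop idea does \emph{not} directly adapt here: setting $z_c=0$ gives $s_{\lambda^{-}}(\vec z_{\ssm c},x_i,y_j)$ with $\lambda^{-}$ still of top part $(n-1)\ell+\ell'=N-1$, so there is no rank drop at all at $z_c=0$ and no power of $z_c$ is forced by that mechanism.
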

\begin{proof}
Note that $Q_{n,\ell,\ell'}(\vec z)$ is symmetric in the $z_i$'s (as
they enter only as simultaneous arguments of Schur functions).  So,
given the factorized form of $s_{\mu}$, equation (\ref{eq.smu}), it
suffices to prove that $Q$ is divided by
$z_1^{\ell'(\ell+1)}\prod_{m=2}^{2n-2}U_{\ell+1}^\ell(z_1,z_m)$.
Using the independence from $\vec x$ and $\vec y$ of equation
(\ref{def-Q}), we can choose to substitute $x_i=q^i z_1$ for $1\leq i
\leq \ell+1$, and leave generic the other $x_j$'s, and all the $y_j$'s
(we have a sufficient number of $x$'s since $(n-1)\ell+\ell'+1 \geq
\ell+1$ for $n\geq 2$).

By applying the recursion relation of Proposition \ref{thm.spinkore}
to the matrix elements $M_{ij}$,
the first $\ell+1$ rows of $M$ are simplified. Consider the matrix
$\widetilde{M}$, that coincides with $M$ on rows $i > \ell+1$, and
otherwise is given by
\be
\label{eq.456765}
\begin{split}
\widetilde{M}_{ij}
&=
\bigg(
z_1^{\ell'}U_{\ell'}(1, q^{i})\prod_{m=2}^{2 n-2}
\frac{U_{\ell+1}(z_m, z_1)}{z_m - q^i z_1}
\bigg)
\bigg(
\frac{U_{\ell+1}(y_j, z_1)}{y_j - x_i}
\,
s_{\lambda_{n-1,\ell,\ell'}}(\vec{z}_{\ssm 1},y_j)
\bigg)
\end{split}
\ee
This matrix is a version of $M$ in which we do \emph{not} replace $x_i
\to q^i z_1$ for all the occurrences of $x_i$ in $M_{ij}$, but only
for a subset. That is, we just have the property, for $1 \leq i \leq \ell+1$,
\be
M_{ij}
=
\left.
\widetilde{M}_{ij} \right|_{x_i=q^i z_1}
\ef,
\ee
and thus $\det M = ( \det \widetilde{M} )|_{x_i=q^i z_1}$. We
constructed $\widetilde{M}$ instead of $M$ with specific intentions: the
two factors in parenthesis in (\ref{eq.456765}) are separately
polynomials after replacing $x_i=q^i z_1$ (and, before the replacing,
they are divided at most by $y_j-x_i$); the factor on the left does
not depend on index $j$ (so it can be extracted from the $i$-th row of
$\widetilde{M}$ when evaluating the determinant); finally, the dependence from
$i$ in the second factor is all due to $x_i$ (so that the $i$-th and
$i'$-th row of $M$ are the same vector of functions, with different
$x$ argument, i.e.~$\det \widetilde{M}$ is at sight divisible by
$\Delta(x_1,\ldots,x_{\ell+1})$).

The factors extracted from the rows give
\be
\prod_{i=1}^{\ell+1}
\Big(
z_1^{\ell'}
U_{\ell'}(1, q^{i})
\prod_{2 \leq m \leq 2 n-2}
\frac{U_{\ell+1}(z_m, z_1)}{z_m - q^i z_1}
\Big)
\ef,
\ee
that is, with some simplifications
(including 
$\prod_{i=1}^{\ell+1}U_{\ell'}(1,q^i) = 1$ if $\gcd(\ell+2,\ell'+1)=1$
and 0 otherwise),
\be
\label{eq.4662435654}
z_1^{\ell'(\ell+1)}
\prod_{m=2}^{2n-2}U_{\ell+1}^\ell(z_1,z_m)
\ef.
\ee
The divisibility of $\det \widetilde{M}$ by
$\Delta(x_1,\ldots,x_{\ell+1})$ implies that $\det \widetilde{M} /
\Delta(x_1,\ldots,x_N)$ has no factors $x_i - x_{i'}$ at the
denominator with $1 \leq i < i' \leq \ell+1$, and thus no pure powers
of $z_1$ at the denominator from the Vandermonde, after the
replacement $x_i=q^i z_1$ (indeed, all the potential factors at the
denominator have the form $q^{i} z_1 -x_j$, with $j > \ell+1$, and
$y_j-q^i z_1$, with $j \leq \ell+1$), thus they do not affect the
claimed factor in (\ref{eq.4662435654}). This completes the proof.
\end{proof}

Now we complete the exhaustion of factors, by proving the following
weaker form of Theorem \ref{th2}
\begin{prop}
\label{lem.grossodiv}
For $n \geq 2$ and $\ell \geq 1$
we have
\be
\label{weaker}
Q_{n,\ell,\ell'}(\vec z) = c(n,\ell,\ell')
\bigg( \prod_{i=1}^{2n-2} z_i^{\ell'(\ell+1)} \bigg)
\;
s_{\mu_{2n-2,\ell+1}}^{\ell}(\vec{z})
\;
s_{\lambda_{n-1,\ell,\ell'}}^{\ell(n-2)+\ell'-1}(\vec{z})
\ef,
\ee
for some numerical constant $c(n,\ell,\ell')$.
\end{prop}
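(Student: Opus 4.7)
The plan is to prove that $s_{\lambda_{n-1,\ell,\ell'}}^{\ell(n-2)+\ell'-1}(\vec{z})$ divides $Q_{n,\ell,\ell'}(\vec{z})$; this, together with Proposition \ref{powers}, pairwise coprimality of the factors, and a degree count, will force $Q_{n,\ell,\ell'}$ to equal the stated product up to a numerical constant. The main tool will be the Bazin--Reiss--Picquet corollary Proposition \ref{prop.bazincoro}.

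Since $Q$ is independent of $\vec{x},\vec{y}$, I would specialize $x_i = q\,z_{\sigma(i)}$ for $i$ in an index set $I$ with $\sigma\colon I \hookrightarrow [2n-2]$ injective. The recursion relation Proposition \ref{thm.spinkore} then factorizes each specialized row as
\[
M_{ij}\big|_{x_i = q z_{\sigma(i)}} = \alpha_i(\vec{z})\,\frac{U_{\ell+1}(y_j, z_{\sigma(i)})}{y_j - q z_{\sigma(i)}}\,s_{\lambda_{n-1,\ell,\ell'}}(\vec{z}_{\ssm \sigma(i)}, y_j),
\]
exhibiting the Schur factor on that row. Since $U_{\ell+1}(y,z)/(y-qz) = \prod_{k=2}^{\ell+1}(y - q^{k} z)$ is a polynomial of bidegree $(\ell,\ell)$, it admits a rank-$(\ell+1)$ decomposition $\sum_{c=0}^{\ell} \phi_c(z)\psi_c(y)$; folding the row prefactors $\alpha_i$ into the $\phi_c$'s recasts each specialized row in the form
\[
M_{ij}\big|_{\mathrm{spec}} = \sum_{c=0}^{\ell} u_i^c\, v_j^c\, s_{\lambda_{n-1,\ell,\ell'}}(\vec{z}_{\ssm\sigma(i)}, y_j),
\]
matching the hypothesis of Proposition \ref{prop.bazincoro} (after the Slater/Schur translation $s_\lambda = \Delta_\lambda/\Delta$).

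Invoking Proposition \ref{prop.bazincoro} with $P = \Delta_{\lambda_{n-1,\ell,\ell'}}$ and rank $k=\ell+1$, and carefully tracking the Vandermonde factors introduced by the Slater-to-Schur conversion together with $\Delta(\vec{x})|_{\mathrm{spec}}$, $\Delta(\vec{y})$, and the row prefactors $\alpha_i$, would give divisibility of $Q(\vec{z})$ by $s_{\lambda_{n-1,\ell,\ell'}}(\vec{z})^{\ell(n-2)+\ell'-1}$. Combining with Proposition \ref{powers}, the three identified polynomial factors of $Q$ are pairwise coprime, thanks to the irreducibility of $s_{\lambda_{n-1,\ell,\ell'}}$ (Proposition \ref{unfact}) and the explicit factorisation of $s_{\mu_{2n-2,\ell+1}}$ into distinct Chebyshev pieces (\ref{eq.smu}). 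A direct degree count in any single $z_i$ shows both $Q$ and the claimed product have degree $N(N-1)$, so the quotient is the asserted numerical constant $c(n,\ell,\ell')$.

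The principal obstacle is the bookkeeping in the Slater-to-Schur conversion, which introduces the Cauchy-kernel factor $\Delta(\vec{z}_{\ssm i}, y_j)^{-1}$ that fails to factor separably across rows and columns, so the power extracted from Proposition \ref{prop.bazincoro} must be reconciled against the $\alpha_i$'s and against $\Delta(\vec{x})|_{\mathrm{spec}}$ to produce the exponent $\ell(n-2)+\ell'-1$. A secondary difficulty is that when $N > 2n-2$ (i.e.\ $\ell \geq 2$) no injective $\sigma\colon [N] \to [2n-2]$ exists, so the specialization must be broadened to $x_i = q^{a_i} z_{\sigma(i)}$ with varying $a_i \in \{1,\ldots,\ell+1\}$, while preserving the rank-$(\ell+1)$ structure on which Proposition \ref{prop.bazincoro} relies.
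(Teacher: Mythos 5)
Your overall strategy coincides with the paper's: specialize the $x_i$ to points $q^{k_i}z_{m_i}$, use the recursion of Proposition \ref{thm.spinkore} to expose a Slater determinant on each row, invoke Proposition \ref{prop.bazincoro} to extract a power of $\Delta_{\lambda_{n-1,\ell,\ell'}}(\vec z)$, and finish by coprimality of the three factors and a degree count. The ``secondary difficulty'' you mention is real but benign: since $N=\ell(n-1)+\ell'+1$ generally exceeds $2n-2$, one must indeed take $x_i=q^{k_i}z_{m_i}$ with the $(k_i,m_i)$ ranging over distinct pairs in $\{1,\dots,\ell+1\}\times\{1,\dots,2n-2\}$, which is exactly what the paper does and costs nothing.

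The genuine gap is in what you call the principal obstacle: it is not mere bookkeeping, because it changes the rank of the bilinear kernel, and the rank $\ell+1$ you state yields the wrong exponent. With $k=\ell+1$ in Proposition \ref{prop.bazincoro} you would get divisibility of the reduced determinant by $\Delta_{\lambda_{n-1,\ell,\ell'}}^{\,N-(\ell+1)}$, i.e.\ exponent $\ell(n-2)+\ell'$ on $s_{\lambda_{n-1,\ell,\ell'}}$, one more than the claimed $\ell(n-2)+\ell'-1$; combined with Proposition \ref{powers} and the total-degree count, this would force $Q_{n,\ell,\ell'}\equiv 0$, a contradiction. The resolution is that the Slater-to-Schur conversion $s_{\lambda_{n-1,\ell,\ell'}}(\vec z_{\ssm m_i},y_j)=\Delta_{\lambda_{n-1,\ell,\ell'}}(\vec z_{\ssm m_i},y_j)/\Delta(\vec z_{\ssm m_i},y_j)$ leaves, after separating off the $j$-only factor $\prod_{r}(y_j-z_r)^{-1}$, a non-separable leftover $(y_j-z_{m_i})$ that must be absorbed into the kernel; by the identity $\frac{U_{\ell+1}(y_j,z_{m_i})}{y_j-q^{k_i}z_{m_i}}\,(y_j-z_{m_i})=U_{\ell+1}(y_j,q^{k_i}z_{m_i})$ (equation (\ref{rel-U})) the true kernel is homogeneous of degree $\ell+1$, hence of rank $\ell+2$, and Proposition \ref{prop.bazincoro} then gives exactly $N-(\ell+2)=\ell(n-2)+\ell'-1$. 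You must also note (via Proposition \ref{unfact}, and the hypothesis $\gcd(\ell'+1,\ell+2)=1$, the complementary case being trivial by Proposition \ref{gcd}) that $s_{\lambda_{n-1,\ell,\ell'}}$ shares no factor with the fully-factorized denominators, so that divisibility of $\det\widehat{M}$ transfers to divisibility of $Q_{n,\ell,\ell'}$ itself. With these corrections your argument becomes the paper's proof.
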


\begin{proof}
As a consequence of Proposition \ref{gcd}, our claim is trivially true
if $\gcd(\ell'+1,\ell+2)>1$,  as the constant in such a case is $0$.
Therefore it remains to analyse the case $\gcd(\ell'+1,\ell+2)=1$.

We can again exploit the invariance in $x$ and $y$ of 
$Q_{n,\ell,\ell'}(\vec z)$ from equation (\ref{def-Q}),
in order to evaluate $\psi_{n,\ell,\ell'}(\vec z, \vec x,\vec y)$ at a specially
simpler set of values $x$ and $y$.
Our choice is to leave the $y_j$'s generic,
and specialize $x_i = q^{k_i} z_{m_i}$, 
for all the indices $i=1,\ldots, \ell (n-1) +\ell'+ 1$, and 
$\{ (k_i,m_i) \}$ being a whatever ordered subset of distinct pairs,
of cardinality $\ell (n-1) +\ell'+ 1$, in the set of all valid pairs 
$\{1, \ldots, \ell+1 \} \times \{1, \ldots, 2n-2\}$ (the difference of
cardinality, $(\ell+2)(n-1)-\ell'-1$, is always positive in our range of
interest $\ell \geq 1$, $0 \leq \ell' \leq \ell$, $n \geq 2$).
Using Theorem \ref{thm.spinkore}
we have
\bes
M_{ij} 
&=
s_{\lambda_{n,\ell,\ell'}} (\vec{z},x_i=q^{k_i} z_{m_i},y_j)
=\\
&=z_{m_i}^{\ell'}U_{\ell'}(1, q^{k_i})
\frac{U_{\ell+1}(y_j, z_{m_i})}{y_j - q^{k_i} z_{m_i}}
\prod_{\substack{
1 \leq r \leq 2 n-2\\r\neq m_i}}
\frac{U_{\ell+1}(z_r, z_{m_i})}{z_r - q^{k_i} z_{m_i}}
s_{\lambda_{n-1,\ell,\ell'}}(\vec{z}_{\ssm m_i},y_j)
\ef.
\end{split}
\ee
Let us adopt the representation (\ref{eq.schurpol}) for the Schur
polynomial (as the ratio of shifted Vandermonde over Vandermonde), to
get
\bes\label{Mij2}
M_{ij} 
&=
\frac{z_{m_i}^{\ell'}U_{\ell'}(1, q^{k_i})}{\Delta(\vec{z}_{\ssm m_i},y_j) }
\frac{U_{\ell+1}(y_j, z_{m_i})}{y_j - q^{k_i} z_{m_i}}
\prod_{\substack{
1 \leq r \leq 2 n-2\\r\neq m_i}}
\frac{U_{\ell+1}(z_r, z_{m_i})}{z_r - q^{k_i} z_{m_i}}
\ 
\Delta_{\lambda_{n-1,\ell,\ell'}} (\vec{z}_{\ssm m_i},y_j)
\\
&=
\frac{z_{m_i}^{\ell'}U_{\ell'}(1, q^{k_i})}{\Delta(\vec{z}) }
(-1)^{m_i+1}
\bigg(
\prod_{r \neq m_i}
\frac{(z_r - z_{m_i})
U_{\ell}(z_r,z_{m_i})}{z_r - q^{k_i}z_{m_i}}
\bigg)
\bigg(
\prod_r \frac{1}{y_j - z_r}
\bigg)
\\
& \qquad \times
\frac{(y_j - z_{m_i})
U_{\ell}^{(0,k_i)}(y_j,z_{m_i})}{y_j - q^{k_i}z_{m_i}}
\ 
\Delta_{\lambda_{n-1,\ell,\ell'}} (\vec{z}_{\ssm m_i},y_j)
\\
&=
\frac{z_{m_i}^{\ell'}U_{\ell'}(1, q^{k_i})}{\Delta(\vec{z}) }
\bigg(
(-1)^{m_i+1}
\prod_{r \neq m_i}
U_{\ell+1}(z_r, q^{k_i} z_{m_i})
\bigg)
\bigg(
\prod_r \frac{1}{y_j-z_r}
\bigg)
\\
& \qquad \times
U_{\ell+1}(y_j, q^{k_i} z_{m_i})
\ 
\Delta_{\lambda_{n-1,\ell,\ell'}} (\vec{z}_{\ssm m_i},y_j)
\ef, 
\end{split} \ee
where in the last equality we made use of the relation
\be\label{rel-U}
\frac{U_{\ell+1}(x,q^hy)}{x - q^ky}=
\prod_{\substack{0 \leq i \leq \ell+1 \\ i \neq h,k}}
(x - q^i y)
=\frac{U_{\ell+1}(x,q^ky)}{x - q^hy}
\ef.
\ee
In the last expression of equation (\ref{Mij2}), we recognize five
factors: a factor
independent on $i$ and $j$, one depending on $i$ alone, one depending
on $j$ alone, and one depending on both $i$ and $j$, which is composed
of $U_{\ell+1}(y_j, q^{k_i} z_{m_i})$, that is 
a homogeneous polynomial in $y_j$ and $z_{m_i}$ of degree $\ell+1$,
and a shifted Vandermonde.
The first three factors are easily extracted when evaluating $\det M$,
so we can write
\be
\label{detMsimpl}
\det M=
\frac{A(\vec z,\vec y)}{B(\vec z,\vec y)
\Delta(\vec{z})^{N}}
\ 
\det \widehat{M}
\,
\ee
with
\begin{align}
\widehat{M}_{ij} 
&=
-U_{\ell+1}(y_j, q^{k_i} z_{m_i})
\Delta_{\lambda_{n-1,\ell,\ell'}} (\vec{z}_{\ssm m_i},y_j)
\ef;
\\
A(\vec z,\vec y)
&=
\prod_i(-1)^{m_i+1}z_{m_i}^{\ell'}
U_{\ell'}(1,q^{k_i}) 
\prod_{\substack{ 
1 \leq i \leq N \\
1 \leq r \leq 2n-2 \\ r \neq m_i }}
\!\!\!
U_{\ell+1}(z_r, q^{k_i} z_{m_i})
\ef;
\\
B(\vec z,\vec y)
&= 
\prod_{\substack{ 1 \leq j \leq N \\
1 \leq r \leq 2n-2 }} 
\! (y_j-z_r)
\ef.
\end{align}
We now substitute the expression of equation (\ref{detMsimpl}) in
(\ref{def-Q}), where we also 
replace
\be
\Delta(\vec{x})
\quad \longrightarrow \quad
\Delta(q^{k_1} z_{m_1}, q^{k_2} z_{m_2}, \ldots)
\ef,
\ee
which leads to
\bes
Q_{n,\ell,\ell'}(\vec z)
&=
\frac{A(\vec z,\vec y)}{B(\vec z,\vec y)}
\frac{1}{\Delta(\vec{y}) \Delta(q^{k_1} z_{m_1}, q^{k_2} z_{m_2},
  \ldots) \Delta(\vec{z})^N}
\; \det \widehat{M}
\ef;
\end{split}
\label{eq.Qandnow2}
\ee
Now, the matrix $\widehat{M}$ is in a form suitable for application of
Proposition \ref{prop.bazincoro}, the divisibility result discussed in
Section \ref{ssec.divicoro}, with $k=\ell + 2$ and, for $0 \leq a \leq
\ell+1$, $u_i^a v_j^a$ being the coefficient of the monomial $y_j^a
z_{m_i}^{\ell+1-a}$ in the expansion of $U_{\ell+1}(y_j, q^{k_i}
z_{m_i})$.

As a consequence we get that
$\Delta_{\lambda_{n-1,\ell,\ell'}}^{N - (\ell+2)}(\vec{z})$
divides $\det \widehat{M}$, and the exponent
$N-(\ell+2) = \ell(n-1)+\ell'+1-(\ell+2)=\ell(n-2)+\ell'-1$ is exactly
the desired one from the statement of Proposition \ref{lem.grossodiv}
(and Theorem \ref{th2}). So we can write
\be
\det \widehat{M} 
=
\Delta_{\lambda_{n-1,\ell,\ell'}}^{\ell(n-2)+\ell'-1} (\vec{z})
\;
R(z,y)
\ee
for $R$ a polynomial.
We thus recognize in (\ref{eq.Qandnow2})
\be
\label{eq.Qandnow3}
Q_{n,\ell,\ell'}(\vec z)
=
s_{\lambda_{n-1,\ell,\ell'}}^{\ell(n-2)+\ell'-1} (\vec{z})
\;
\frac{A(z,y) R(z,y)}{B(z,y)\Delta(\vec{y}) \Delta(q^{k_1} z_{m_1}, q^{k_2} z_{m_2},
  \ldots)\Delta(\vec{z})^{\ell+2}}
\ef.
\ee
Now, as $\gcd(\ell'+1,\ell+2)=1$, we obtain two consequences from
Proposition \ref{unfact}. First, observing that the denominator 
in (\ref{eq.Qandnow3}) is completely factorized into linear terms (of
the form $y_i-z_j$, or $z_i - q^k z_j$),
$s_{\lambda_{n-1,\ell,\ell'}}(\vec{z})$ 
cannot be divided by any of these factors,
therefore it follows from equation (\ref{eq.Qandnow3}) that
$s_{\lambda_{n-1,\ell,\ell'}}^{\ell(n-2)+\ell'-1} (\vec{z})$ must
divide $Q_{n,\ell,\ell'}(\vec z)$. 

Furthermore, we know from Proposition \ref{powers} that
$s_{\mu_{2n-2,\ell+1}}^{\ell}(\vec{z})\prod_{i=1}^{2n-2}
z_i^{\ell'(\ell+1)}$ divides $Q_{n,\ell,\ell'}(\vec z)$.  Also this
polynomial is factorized into linear terms, of the form $z_i$ or $z_i
- q^k z_j$, thus it is relatively prime with
$s_{\lambda_{n-1,\ell,\ell'}}^{\ell(n-2)+\ell'-1}$.  This shows that
Proposition \ref{lem.grossodiv} holds, for $c(n,\ell,\ell')$ a
polynomial. However, all the involved functions are homogeneous
polynomials, and it is easily determined that $c(n,\ell,\ell')$ has
degree~0, thus it is a constant.
\end{proof}

\subsection{Determine the constant $c(n,\ell,\ell')$}
\label{ssec.const}

We can evaluate directly the constant for $n=1$, which is
$c(1,\ell,\ell')=(-1)^{\binom{\ell'+1}{2}}$, and we know that, for
$n\geq 2$ and $\gcd(\ell+2,\ell'+1)>1$, $c(n,\ell,\ell')=0$.
In the rest of this section we will complete the proof of the expression
(\ref{eq.cTh2}), for the remaining case
$n\geq 2$ and $\gcd(\ell+2,\ell'+1)=1$.
This is done by induction in $n$, 
i.e.\ we will prove that, for $(n,\ell,\ell')$ as above,
\be
\frac{c(n,\ell,\ell')}{c(n-1,\ell,\ell')}
=
(-1)^{\binom{\ell+1}{2}}
\ef.
\ee
Now that we only have to determine the constant, we have the freedom
of choosing simpler values also for the $z_k$'s, besides that for the
$x_i$'s and the $y_j$'s.

First of all, in equation (\ref{def-Q}) 
let us specialize $x_i=q^iz_1$ for
$1\leq i \leq \ell$. In this way we find that the matrix elements
$M_{ij}$ for $1 \leq i \leq \ell$
take the form\footnote{That is, nothing but $\widetilde{M}_{ij}$ in
  (\ref{eq.456765}), under the full replacement $x_i \to q^i z_1$.}
\be
M_{ij}
=
z_1^{\ell'}U_{\ell'}(1, q^{i})
\frac{U_{\ell+1}(y_j, z_1)}{y_j - q^{i} z_1}
\prod_{r=2}^{2n-2}
\frac{U_{\ell+1}(z_r, z_1)}{z_r - q^{i} z_1}
s_{\lambda_{n-1,\ell,\ell'}}(\vec{z}_{\ssm 1},y_j)
\label{eq.237654375}
\ee
As we have done in the proof of Proposition \ref{powers}, when we
compute the determinant of the matrix $M$, for $1\leq i\leq \ell$ we
extract the factor  
\be
\label{fact3}
z_1^{\ell'}U_{\ell'}(1, q^{i})\prod_{r=2}^{2n-2}
\frac{U_{\ell+1}(z_r, z_1)}{z_r - q^i z_1}
\ee
from the $i$-th row, and find
\be
\label{det-tilde}
\det M = F(z_1;\vec z_{\ssm 1}) \det 
M'
\ee
where
\be
F(z_1;\vec z_{\ssm 1})
=
\frac{z_1^{\ell'\ell}}{U_{\ell'}(1,q^{\ell+1})}
\prod_{r=2}^{2n-2}
U_{\ell+1}^{\ell-1}(z_r,z_1)
\;
(z_r-q^{\ell+1}z_1)
\ee
and the matrix $M'$
coincides with $M$ along the last 
$N-\ell$
rows, while each of the first $\ell$ rows is divided by the
factor in equation (\ref{fact3}). 

We now substitute the expression (\ref{det-tilde}) for $\det M$ into
the definition of $Q_{n,\ell,\ell'}(\vec z)$ and then into
equation (\ref{weaker}), taking into accout also the variable
substitutions in the Vandermonde at denominator  
\begin{gather}
\label{eq.09876465}
\Delta(\vec x)
\quad
\longrightarrow  
\quad
z_1^{\binom{\ell}{2}} 
\Delta'(z_1, \vec x_{\ssm 1,\dots \ell})
\\
\Delta'(z_1, \vec x_{\ssm 1,\dots \ell})
:=
\Delta(q, q^2, \ldots, q^\ell) 
\;
\Delta(\vec x_{\ssm 1,\dots \ell})
\prod_{\substack{1 \leq i \leq \ell \\
\ell+1 \leq k \leq N
}}
(q^iz_1-x_k) 
\ef.
\end{gather}
It could be explicitly evaluated, although not needed for our purposes,
that
\be
\Delta(q, q^2, \ldots, q^\ell)^2
=
(-1)^{\binom{\ell+1}{2}}
(q^{-1}-q^{-2})^2
(\ell+2)^{\ell-2}
\ef.
\ee
We obtain
\bes
Q_{n,\ell,\ell'}(\vec z)
&=
\frac{F(z_1;\vec z_{\ssm 1})
\det M'
}
{z_1^{\binom{\ell}{2}} 
\Delta'(z_1, \vec x_{\ssm 1,\dots \ell})
\Delta(\vec y)}
\\ 
&=
c(n,\ell,\ell')\prod_{i=1}^{2n-2} z_i^{\ell'(\ell+1)}
\;
s_{\mu_{2n-2,\ell+1}}^{\ell}(\vec{z})
\;
s_{\lambda_{n-1,\ell,\ell'}}^{\ell(n-2)+\ell'-1}(\vec{z})
\ef,
\end{split}
\ee
We eliminate the factors appearing on both sides of the previous
equation and we obtain
\bes
\label{partial-rel}
&\frac{\det M'
}{U_{\ell'}(1,q^{\ell+1})
\Delta'(z_1, \vec x_{\ssm 1,\dots \ell})
\Delta(\vec y)}
= c(n,\ell,\ell')
z_1^{\binom{\ell}{2}+\ell'}
\\ 
& \qquad \times
\prod_{i=2}^{2n-2} z_i^{\ell'(\ell+1)}
\;
\prod_{r=2}^{2n-2}\frac{U_{\ell+1}(z_r,
q^{\ell+1}z_1)}{z_r-z_1}
s_{\mu_{2n-3,\ell+1}}^{\ell}(\vec{z}_{\ssm 1})
\;
s_{\lambda_{n-1,\ell,\ell'}}^{\ell(n-2)+\ell'-1}(\vec{z})
\ef.
\end{split}
\ee
Note that, among other things, we have eliminated some factors
$z_r-q^{\ell+1}z_1$ 
on both sides, a simplification that
allows us to set $z_2=q^{\ell+1}z_1$. Furthermore, we choose to
specialize $y_i=q^i z_1$, for $1\leq i\leq \ell$ (the Vandermonde
factor $\Delta(\vec{y})$ in equation (\ref{partial-rel})
is then to be treated similarly to what is done in
(\ref{eq.09876465}) for $\Delta(\vec x)$).

It is easy to see which simplifications occur on the factorized
right-hand side of equation (\ref{partial-rel})
\begin{align}
\prod_{i=2}^{2n-2} z_i^{\ell'(\ell+1)}
&\rightarrow
q^{\ell'} z_1^{\ell'(\ell+1)}
\prod_{i=3}^{2n-2}
z_i^{\ell'(\ell+1)}
\\ 
\prod_{r=2}^{2n-2}
\frac{U_{\ell+1}(z_r, q^{\ell+1}z_1)}{z_r-z_1}
&\rightarrow
\frac{z_1^\ell(\ell+2)}{q^{-2}-q^{-1}}
\prod_{r=3}^{2n-2}
\frac{U_{\ell+1}(z_r, q^{\ell+1}z_1)}{z_r-z_1}
\\
s_{\mu_{2n-3,\ell+1}}^{\ell}(\vec{z}_{\ssm 1})
&\rightarrow
\prod_{r=3}^{2n-2}U_{\ell+1}^\ell(z_r,q^{\ell+1} z_1)
\;
s_{\mu_{2n-4,\ell+1}}^{\ell}(\vec{z}_{\ssm 1,2})
\\  
s_{\lambda_{n-1,\ell,\ell'}}
(\vec{z})
&\rightarrow
z_1^{\ell'}U_{\ell'}(1, q^{\ell+1})
\prod_{r=3}^{2n-2}
\frac{U_{\ell+1}(z_r, z_1)}{z_r - q^{\ell+1} z_1}
s_{\lambda_{n-2,\ell,\ell'}}
(\vec{z}_{\ssm 1,2})
\ef.
\end{align}
Even more drastic simplifications arise
on the left-hand side of equation (\ref{partial-rel}).
For
$i>\ell$ and 
$j\leq \ell$,
the entries
$M'_{ij}$
consist of the Schur polynomials
$s_{\lambda_{n,\ell,\ell'}}$ evaluated at a set of variables including
a triple satisfying the wheel condition (namely,
$z_1$,
$y_j=q^j z_1$ and $z_2=q^{\ell+1}z_1$),
therefore they vanish because of Proposition \ref{thm.spinwheel}.
Similarly, for $i \leq \ell$ and 
$j\leq \ell$,
with the only exception of $i=j$,
$M'_{ij}$ vanishes because of the factor
$\frac{U_{\ell+1}(y_j, z_1)}{y_j - q^{i} z_1} = 
\prod_{1 \leq k \leq \ell+1;\, k\neq i} (y_j - q^k z_1)$
(cf.\ equation (\ref{eq.237654375})).
As a result,
\be
\label{eq.64826474}
\det M' = 
\Big( \prod_{i=1}^{\ell} M'_{ii} \Big)\;
\det M'_{\{\ell+1,\ldots,N\}, \{\ell+1,\ldots,N\}}
\ee
The diagonal factors $M'_{ii}$ read
\be
M'_{ii}
=
\frac{z_1^{\ell+\ell'}
  (\ell+2)q^{\ell i}U_{\ell'}(q^{\ell+1},q^i)}{1-q^{-i}}\prod_{r=3}^{2n-2}
\frac{U_{\ell+1}(z_r,q^{\ell+1}z_1)}{z_r-q^iz_1}s_{\lambda_{n-2,\ell,\ell'}}(\vec   
z_{\ssm 1,2}).
\ee
Most importantly, the minor of the matrix $M'$ restricted to the last
$N-\ell$ rows and columns is easily related to the matrix $M$ for the
system of size $n-1$, where the indices of the variables $z_k$ run
from $3$ to $2n-2$, while the indices of the $x_i$'s and $y_j$'s run
from $\ell+1$ to $N = (n-1)\ell+\ell'+1$. More precisely,
$M'_{\ell+i,\ell+j}$, at size $n$ and under the specializations above,
is proportional to $M_{ij}$ at size $n-1$, the proportionality factor
for the pair $(i,j)$ being
\bes
z_1^{\ell'}U_{\ell'}(1, q^{\ell+1})
\bigg(
\prod_{r=3}^{2n-2}
\frac{U_{\ell+1}(z_r, z_1)}{z_r - q^{\ell+1} z_1}
\bigg)
\frac{U_{\ell+1}(x_{\ell+i}, z_1)}{x_{\ell+i} - q^{\ell+1} z_1}
\;
\frac{U_{\ell+1}(y_{\ell+j}, z_1)}{y_{\ell+j} - q^{\ell+1} z_1}
\end{split}
\ee
(the relevant fact is that this quantity factorizes into a term
depending on $x_i$ only, and a term depending on $y_j$ only, these
terms thus factorize in the evaluation of the determinant).
Thus we get
\bes
&
\det M'_{\{\ell+1,\ldots,N\}, \{\ell+1,\ldots,N\}}
=
\bigg[
z_1^{\ell'}U_{\ell'}(1, q^{\ell+1})
\bigg(
\prod_{r=3}^{2n-2}
\frac{U_{\ell+1}(z_r, z_1)}{z_r - q^{\ell+1} z_1}
\bigg)
\bigg]^{N-\ell}
\\
& \qquad \times 
\prod_{i=1}^{N-\ell}
\frac{U_{\ell+1}(x_{\ell+i}, z_1)}{x_{\ell+i} - q^{\ell+1} z_1}
\;
\prod_{j=1}^{N-\ell}
\frac{U_{\ell+1}(y_{\ell+j}, z_1)}{y_{\ell+j} - q^{\ell+1} z_1}
\\
& \qquad \times 
\Delta(x_{\ell+1},\ldots,x_N)
\;
\Delta(y_{\ell+1},\ldots,y_N)
\; 
Q_{n-1,\ell,\ell'}(z_3,\ldots,z_{2n})
\ef.
\end{split}
\label{lhs-partial}
\ee
In this equation we can substitute $Q_{n-1,\ell,\ell'}(\vec z_{\ssm
  1,2})$ with its expression given by equation (\ref{weaker}) -- the
factor $c(n-1,\ell,\ell')$ emerges at this point -- then, we can
replace (\ref{lhs-partial}) in (\ref{partial-rel}), using
(\ref{eq.64826474}).  In this way we reach a fully factorized form on
both sides of equation (\ref{partial-rel}) and erasing the common
factors is reduced to simple algebra\footnote{Useful relations at this
  point are
\begin{align*}
\prod_{i=1}^\ell \frac{q^{\ell i}}{1-q^{-i}}
&=
q^{-2}-q^{-1}
\ef;
&
\prod_{i=1}^\ell U_{\ell'}(q^{\ell+1},q^i) 
&=
\frac{q^{\ell'}}{U_{\ell'}(1,q^{\ell+1})}
\ef. 
\end{align*}
}. At the end, we obtain the recursive relation
\be
c(n,\ell,\ell')= (-1)^{\binom{\ell+1}{2}} c(n-1,\ell,\ell')
\ef,
\ee
as was to be proven.
\hfill $\square$


\appendix
\section{Basic facts on symmetric polynomials}
\label{sec.schur}


\noindent
A \emph{partition} $\lambda$ of length $k$ is a non-increasing
sequence of $k$ non-negative numbers, $\lambda=(\lambda_1\geq
\lambda_2\geq \ldots\geq \lambda_k\geq 0)$.  The number of terms (or
\emph{parts}) $\ell(\lambda)=k$, and the value of the sum
$|\lambda|=\sum_{i=1}^k\lambda_i$, are called respectively the
\emph{length} and the \emph{weight} of the partition.  Seen as a table
of cells (as e.g.\ in figure \ref{fig.YDnll}),
$\lambda$ is often called a \emph{Young diagram}.

Given an ordered $\ell$-uple of indeterminates 
$\bz = \{ z_i \}_{1\leq i \leq \ell}$,
the \emph{Vandermonde determinant} 
$\Delta(\bz)$
is defined as the determinant of the 
$\ell \times \ell$ matrix $V$ with $V_{ij} = z_i^{\ell-j}$.
It is well known that
$
\Delta(\bz) = \prod_{1 \leq i < j \leq \ell}
(z_i - z_j)
$.
For a partition $\lambda$ of length $\ell$ one similarly defines
the \emph{shifted Vandermonde determinant}
$\Delta_{\lambda}(\bz)$
as the determinant of the 
$\ell \times \ell$ matrix $V$ with $V_{ij} = z_i^{\lambda_j+\ell-j}$.
Thus $\Delta(\bz) \equiv 
\Delta_{(0,0,\ldots,0)}(\bz)$.
Then, the \emph{Schur polynomial} associated to $\lambda$ is the
function in $\ell$ indeterminates
\be
\label{eq.schurpol}
s_{\lambda}(\bz)
=
\frac{\Delta_{\lambda}(\bz)}{\Delta(\bz)}
\ef.
\ee
It is indeed a polynomial, it is symmetric in all its variables, and
homogeneous of degree $|\lambda|$.
The Schur functions are at the heart of algebraic combinatorics
\cite{stanley} and enjoy several remarkable properties (see
\cite{macdonald}).
Here we limit ourselves to present the few simple results we need in
the paper, among which a ``splitting formula'':
\begin{prop}
\label{prop.limschur}
Let $\lambda$ and $\mu$ two partitions of lengths respectively $k$ and
$h$, such that $\lambda_{k}\geq \mu_1$.
Call $\nu$
the partition
$\nu=(\lambda_1,\dots,\lambda_{k},\mu_1,\dots,\mu_{h})$. Then 
we have
\be
\label{limit-schur}
\lim_{\epsilon \rightarrow 0}
\frac{s_\nu(z_1, \ldots, z_k,\epsilon y_1, \ldots, \epsilon y_h)}{\epsilon^{|\mu|}}
= s_{\lambda}(z_1, \ldots, z_k)
\,
s_{\mu}(y_1, \ldots, y_h) 
\ef.
\ee
\end{prop}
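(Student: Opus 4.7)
The plan is to apply the bialternant formula (\ref{eq.schurpol}), $s_\nu = \Delta_\nu/\Delta$, and match leading orders in $\epsilon$ of numerator and denominator. Set $N = k+h$ and $\alpha_j := \nu_j + N - j$; the hypothesis $\lambda_k \geq \mu_1$ is exactly what makes $\nu$ a bona fide partition, and in turn guarantees that the sequence $(\alpha_j)_{j=1}^N$ is \emph{strictly} decreasing.

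First I would expand $\Delta_\nu(z_1,\ldots,z_k,\epsilon y_1,\ldots,\epsilon y_h)$ by the Leibniz rule. A permutation $\sigma$ contributes an $\epsilon$-factor equal to $\epsilon^{\sum_{i>k}\alpha_{\sigma(i)}}$, so the lowest power of $\epsilon$ is obtained by minimizing $\sum_{j\in S}\alpha_j$ over $h$-subsets $S \subseteq \{1,\ldots,N\}$. Strict monotonicity forces this minimum to be attained \emph{uniquely} at $S = \{k+1,\ldots,N\}$, with value $|\mu|+\binom{h}{2}$. The surviving block-diagonal permutations then factorize the leading term into a product of two determinants: pulling the common shift $z_i^h$ out of each row of the upper $k\times k$ block turns it into $\prod_i z_i^h \cdot \Delta_\lambda(z)$, while extracting $\epsilon^{\alpha_j}$ from each column of the lower $h\times h$ block yields a factor $\epsilon^{|\mu|+\binom{h}{2}}\Delta_\mu(y)$.

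Next I would expand the denominator directly using the familiar factorization
\[
\Delta(z_1,\ldots,z_k,\epsilon y_1,\ldots,\epsilon y_h) = \epsilon^{\binom{h}{2}}\Delta(z)\Delta(y)\prod_{\substack{1\leq i\leq k\\ 1\leq j\leq h}}(z_i - \epsilon y_j),
\]
whose leading part as $\epsilon \to 0$ is $\epsilon^{\binom{h}{2}}\Delta(z)\Delta(y)\prod_i z_i^h$. Forming the ratio, the common factors $\epsilon^{\binom{h}{2}}$ and $\prod_i z_i^h$ cancel between numerator and denominator; dividing by $\epsilon^{|\mu|}$ before taking $\epsilon \to 0$ then produces exactly $\Delta_\lambda(z)\Delta_\mu(y)/(\Delta(z)\Delta(y)) = s_\lambda(z)\, s_\mu(y)$.

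The only delicate step is the uniqueness of the minimizing subset $S$: it is what guarantees that the leading term is a \emph{pure product} of two determinants rather than a sum over several competing block structures. It is also the place where the partition hypothesis $\lambda_k \geq \mu_1$ enters in a non-negotiable way: a violation would make $\alpha_k = \alpha_{k+1}$ possible, the two blocks would no longer decouple at leading order, and the clean limit in the statement would be spoiled by cross terms.
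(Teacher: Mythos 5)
Your proof is correct and follows essentially the same route as the paper's: both reduce to the bialternant formula and isolate the unique block structure giving the minimal power of $\epsilon$ in the shifted Vandermonde (the paper via Laplace expansion along the first $k$ rows, you via the Leibniz sum, which is the same computation). Your treatment of the denominator by the explicit product formula, rather than as the $\gamma=0$ case of the same expansion, is an inessential variation.
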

\noindent
This generalizes the simple property, that $s_{\lambda}(\vec{z})$ has
maximum degree $\lambda_1$, and mimimum degree $\lambda_{\ell(\lambda)}$, in
any of its variables. For the connoisseurs, the proposition can be
easily proven in several ways, for example by using the decomposition
formula for Schur function 
$s_{\alpha}(\vec{x},\vec{y}) =
\sum_{\beta \subseteq \alpha}
s_{\beta}(\vec{x})
s_{\alpha / \beta}(\vec{y})$
(see e.g.\ \cite[eq.\;(5.9)]{macdonald}), and simple properties of
skew Schur functions (that we do not introduce).
Here we provide a more verbose but completely self-contained proof.

\begin{proof}
Using the defining equation (\ref{eq.schurpol}), we are led to study the
behaviour of $\Delta_{\gamma}(\vec z,\epsilon
\vec y)$ as
$\epsilon\rightarrow 0$, for the cases $\gamma=\nu$ (at numerator) and
$\gamma=0$ (at denominator). More generally, consider
$\gamma=(\gamma_1,\ldots,\gamma_{k+h})
\equiv (\alpha_1,\ldots,\alpha_k,\beta_1,\ldots,\beta_h)$.
Recall that $\Delta_{\gamma}(\vec z,\epsilon
\vec y)$ is defined as the
determinant  of the matrix $V_{ij} =
z_i^{\gamma_j+k+h-j}$ for $i\leq k$ and $V_{ij} =
(\epsilon y_{i-k})^{\gamma_j+k+h-j}$ for $i>k$. Consider 
the Laplace expansion of $V$ along the first $k$ rows:
\be
\det V
= \sum_{\substack{I \subseteq [k+h] \\ |I| = k}}
\epsilon(I,[k])
\det V_{[k],I} \det V_{[k]^{c}, I^{c}}
\ef.
\ee
As the summand with index $I$ has an exposed factor
$\epsilon^{\sum_{j \in I^c} (\gamma_j+k+h-j)}$, the term with $I=[k]$ has
a factor $\epsilon^{|\beta| + \binom{h}{2}}$, and all other terms have a
higher power. Thus
\bes
\frac{\Delta_{\gamma}(\vec z,\epsilon \vec y)}
{\epsilon^{|\beta| + \binom{h}{2}}}
&=
\det(z_i^{\alpha_j+k+h-j})_{1 \leq i,j \leq k}
\;
\det(y_i^{\beta_j+k+h-(k+j)})_{1 \leq i,j \leq h}
\;
+ \mathcal{O}(\epsilon)
\\
&=
\Big( \prod_{i=1}^k z_i^h \Big)
\Delta_{\alpha}(\vec{z})
\;
\Delta_{\beta}(\vec{y})
\;
+ \mathcal{O}(\epsilon)
\ef.
\end{split}
\ee
Comparing this equation for $\gamma=\nu$ and $\gamma=0$ allows us to conclude.
\end{proof}

\vspace{2mm}

\noindent
The bivariate homogeneous Chebyshev polynomials of the second kind are
defined as
\be
U_k(x,y) = \frac{x^{k+1}-y^{k+1}}{x-y}
 = x^k + x^{k-1} y + \cdots + y^k
\ef.
\ee
Define the \emph{staircase partition} $\mu_{n,\ell}$ as the
length-$n$ partition
\be
\mu_{n,\ell} = \big(
\ell n - \ell, \ell n - 2 \ell,\ldots,\ell,0
\big)
\ef.
\ee
The associated Schur function
is easily evaluated through (\ref{eq.schurpol})
\bes
s_{\mu_{n,\ell}}(\vec{z}) 
&=
\frac{\Delta_{\mu_{n,\ell}}(\bz)}{\Delta(\bz)}
=
\frac{\Delta(z_1^{\ell+1},\ldots,z_n^{\ell+1})}{\Delta(z_1,\ldots,z_n)}
\\
&=
\prod_{1 \leq i < j \leq n}
\frac{z_i^{\ell+1}-z_j^{\ell+1}}{z_i-z_j}
=
\prod_{1 \leq i < j \leq n}
U_{\ell}(z_i,z_j)
\ef.
\end{split}
\label{eq.staircase}
\ee

\section{Properties of staircase Schur functions}
\label{app.paul}

\noindent
Let us consider three non-negative integers $N$, $m$ and $\ell$, with
$m \geq 1$, and a partition $\lambda'$ of length $m$, with $\lambda'_1
- \lambda'_m \geq \ell$.
We define the partition $\lambda(N,m,\ell,\lambda')$ 
as follows: for $0 \leq i < N$, consider the unique way of writing
$N-i = am+b$, with $a \geq 0$ and $0 \leq b < m$ 
(it is just $a = \lfloor (N-i)/m \rfloor$ and $b \equiv N-i \ ({\rm mod}\ m)$).
Then
\be
\lambda_{N-i} = a m + \lambda'_b
\ef.
\ee
(see fig.\ \ref{fig.lamNLM}).
\begin{figure}
\begin{center}
\setlength{\unitlength}{20pt}
\begin{picture}(8,5.6)(-1.4,-0.2)
\put(0,-.5){\includegraphics[scale=2]{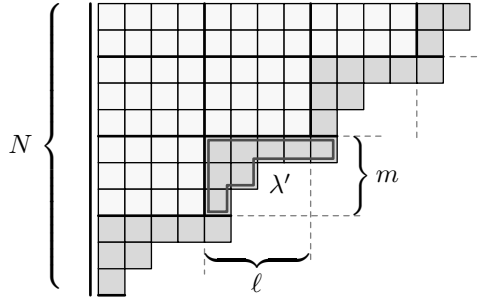}}
\put(-1.4,2.25){$N$}
\put(-0.8,2.25){$\left\{ \rule{0pt}{59pt} \right.$}
\put(2.32,0.4){$\underbrace{\rule{39pt}{0pt}}_{\displaystyle{\ell}}$}
\put(5,1.7){$\left. \rule{0pt}{17pt} \right\} m$}
\put(3.52,1.48){$\lambda'$}
\end{picture}
\end{center}
\caption{\label{fig.lamNLM}An example of partition $\lambda(N,m,\ell,\lambda')$ 
with $N=11$, $m=3$, $\ell=4$ and $\lambda' = (5,2,1)$.}
\end{figure}
We call \emph{$m$-staircase diagrams} such Young diagrams, and
\emph{$m$-staircase Schur functions} the Schur functions in $N$
variables $s_{N,m,\ell,\lambda'}(\vec{z}) \equiv
s_{\lambda(N,m,\ell,\lambda')}(\vec{z})$.  These functions generalize
the ($1$-)-staircase and $2$-staircase functions defined
in (\ref{eq.staircasedef}) and (\ref{eq.ladderNL}), corresponding to
take $m=1$ and $2$ respectively, $\lambda'_m = 0$ and $N$ a multiple
of $m$ ($\lambda'_1 \equiv \ell'$ for $2$-staircase functions).
In this section we set $q = \exp(\frac{2 \pi i}{\ell+m})$.

We say that a symmetric function in $N$ variables $f(z_1,\ldots, z_N)$
satisfies the \emph{$(m,\ell)$-wheel condition} if,
for $I=\{ i_1,\ldots,i_{m+1} \} \subseteq [N]$
and $K=\{ k_1,\ldots,k_{m+1} \} \subseteq [\ell+m]$,
\be
f(z_1,\ldots, z_N)|_{z_{i_a} = q^{k_a} w}
=0
\ef.
\ee
Note that, as we deal with symmetric polynomials, it is not necessary
to take ordered $m$-uples instead of subsets.  We call a
specialization $z_{i_a} = q^{k_a} w$ of the form above a ``wheel
hyperplane''.  Proposition \ref{thm.spinwheel} is the $2$-staircase
function specialization of the following more general proposition.
The proof we produce below is a minor variation of the one presented
in~\cite{cit.pzj_hs} (within the proof of its Theorem 4) for that case.
\begin{prop}
\label{prop1.paulM}
The symmetric function in $N$ variables
$s_{N,m,\ell,\lambda'}(\vec{z})$
satisfies the $(m,\ell)$-wheel condition.
\end{prop}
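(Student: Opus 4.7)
The plan is to prove vanishing via the quotient representation $s_{N,m,\ell,\lambda'}(\vec z)=\Delta_\lambda(\vec z)/\Delta(\vec z)$, where $\Delta_\lambda(\vec z)=\det V$ with $V_{ij}=z_i^{\mu_j}$ and $\mu_j=\lambda_j+N-j$. First one checks that the denominator $\Delta(\vec z)$ acquires no spurious zero on the wheel hyperplane $z_{i_a}=q^{k_a}w$: the factors $z_{i_a}-z_{i_b}=(q^{k_a}-q^{k_b})w$ are all nonzero because the $k_a$'s are distinct elements of $\{1,\ldots,\ell+m\}$ and $q$ is a primitive $(\ell+m)$-th root of unity, while the remaining factors of $\Delta$ involve the still-generic $z$'s and so do not vanish identically. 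Hence it suffices to show that $\Delta_\lambda(\vec z)$ vanishes after the specialization.

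The key combinatorial input is an $(\ell+m)$-periodicity of the exponents $\mu_j$, built directly into the $m$-staircase shape: inspection of the definition of $\lambda(N,m,\ell,\lambda')$ gives $\lambda_j-\lambda_{j+m}=\ell$ whenever $j+m\leq N$, so
\be
\mu_j-\mu_{j+m}=\ell+m \qquad \text{for all } j+m\leq N.
\ee
Since $q^{\ell+m}=1$, iterating yields $q^{\mu_j}=q^{\mu_{j'}}$ whenever $j\equiv j'\pmod m$. Therefore the set $\{q^{\mu_j}:1\leq j\leq N\}$ has cardinality at most $m$.

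Now consider the $(m+1)\times N$ submatrix $V'$ of $V$ obtained by selecting the rows $i_1,\ldots,i_{m+1}$: after the specialization its entries are
\be
V'_{a,j}=(q^{k_a}w)^{\mu_j}=w^{\mu_j}\,(q^{\mu_j})^{k_a}.
\ee
Any two columns $j,j'$ with $q^{\mu_j}=q^{\mu_{j'}}$ are proportional vectors in $\mathbb{C}^{m+1}$, with ratio $w^{\mu_j-\mu_{j'}}$. By the previous paragraph the $N$ columns of $V'$ fall into at most $m$ proportionality classes, so $\mathrm{rank}\,V'\leq m<m+1$ and the $m+1$ rows of $V'$ are linearly dependent. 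Laplace expansion of $\det V$ along these rows then forces $\det V=\Delta_\lambda(\vec z)=0$, whence $s_{N,m,\ell,\lambda'}(\vec z)=0$ on the wheel hyperplane.

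The only point of substance is the periodicity $\mu_j-\mu_{j+m}=\ell+m$; everything else is an elementary rank bound. This periodicity is also the reason why the wheel condition is formulated at an $(\ell+m)$-th root of unity --- $\ell+m$ is precisely the period imposed on the exponents $\mu_j$ by the $m$-staircase shape, and it is what $q$ must kill so that the collapse to $m$ residues takes place.
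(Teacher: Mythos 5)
Your proof is correct and follows essentially the same route as the paper's: write the Schur function as $\Delta_\lambda/\Delta$, note that the denominator does not vanish identically on the wheel hyperplane, observe that the shifted exponents are constant modulo $\ell+m$ along each residue class of the column index modulo $m$, and conclude that the $(m+1)$-row specialized submatrix has rank at most $m$, so the shifted Vandermonde vanishes. The only cosmetic difference is that you derive the mod-$(\ell+m)$ collapse from the periodicity $\mu_j-\mu_{j+m}=\ell+m$ of the exponents, whereas the paper reads it off from the explicit closed form of the shifted parts.
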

\begin{proof}
Consider the generic wheel hyperplane $z_{i_a} = q^{k_a} w$ for $i_a
\in I$ and $k_a \in K$ as above.  Call $\lambda=
\lambda(N,m,\ell,\lambda')$ for brevity.  Represent
$s_{N,m,\ell,\lambda'}(\vec{z})$ as a ratio of shifted Vandermonde
over Vandermonde, $\Delta_{\lambda}/\Delta$, as in equation
(\ref{eq.schurpol}).  As, even under the specialization, the variables
$z_i$ are all distinct, the Vandermonde at the denominator is
non-singular, and it suffices to prove that the shifted Vandermonde
vanishes.  The shifted entries of the partition are $\tilde \lambda_i
=\lambda_i + (N-i)$, and writing $i=N-a m-b$, we have
$\tilde{\lambda}_{N-am-b} = (\ell+m)a+b+\lambda'_{m-b}$.  Note in
particular that
\be
\label{eq.7654386}
\tilde{\lambda}_{N-am-b}
\equiv b+\lambda'_{m-b} \quad ({\rm mod}\ \ell+m)
\ee
regardless of $a$.  Consider the matrix $V_{ij} =
z_i^{\tilde{\lambda}_j}$, such that $\Delta_{\lambda} = \det V$.
Call $V'$ the rectangular
minor of $V$, restricted to the $m+1$ rows in $I$, and write
$j=N-a m-b$ as above.  Then, because of equation
(\ref{eq.7654386}),
\be
V'_{ij} =
z_i^{\tilde{\lambda}_{j} } 
=
w^{\tilde{\lambda}_{j} } 
q^{k_i (N - (\ell+m) a_j - b_j - \lambda'_{b_j} ) }
=
w^{\tilde{\lambda}_{j} } q^{N k_i}
q^{-k_i (b_j+\lambda'_{m-b_j})}
\ef.
\ee
As $b + \lambda'_{b}$ for $b \in \{0,\ldots,m-1\}$ takes $m$
distinct values, $V'$ has rank at most $m$, 
while it has $m+1$ rows.
This proves that $\det V = 0$.
\end{proof}

Now we present a generalization of Proposition~\ref{thm.spinkore}.
\begin{prop}
\label{thm.paulM}
For $I=\{ i_1,\ldots,i_{m} \} \subseteq [N]$
and $K=\{ k_1,\ldots,k_{m} \} \subseteq [\ell+m]$,
$s_{N,m,\ell,\lambda'}(\vec{z})$ satisfies the recursion 
\bes
\label{eq.koreLM}
s_{N,m,\ell,\lambda'}&(\vec{z})(\vec{z}_{\ssm I}, q^{k_1} w, \ldots, q^{k_m} w)
=\\
&s_{\lambda'}(q^{k_1} , \ldots, q^{k_m} )w^{|\lambda'|}
\bigg(
\prod_{j \in [N] \ssm I}
\prod_{h \in [\ell+m] \ssm K}
(z_j - q^{h} w)
\bigg)
s_{N-m,m,\ell,\lambda'}(\vec{z}_{\ssm I})
\ef.
\end{split}
\ee
\end{prop}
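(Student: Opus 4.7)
The plan is to mirror and extend the proof of Proposition~\ref{prop1.paulM}. Write $s_{N,m,\ell,\lambda'}(\vec z)=\Delta_\lambda(\vec z)/\Delta(\vec z)$ with $\lambda=\lambda(N,m,\ell,\lambda')$, specialize $z_{i_a}=q^{k_a}w$ for $a=1,\ldots,m$, and evaluate both the Vandermonde and the shifted Vandermonde separately. The denominator factors immediately as
\[
\Delta(\vec z_{\ssm I},q^{k_1}w,\ldots,q^{k_m}w)=\pm\,\Delta(\vec z_{\ssm I})\, w^{\binom{m}{2}}\Delta(q^{k_1},\ldots,q^{k_m})\!\!\prod_{\substack{a\in[m]\\ j\in I^c}}\!\!(q^{k_a}w-z_j).
\]

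For the numerator I would reuse the congruence $\tilde\lambda_{N-am-b}\equiv b+\lambda'_{m-b}\pmod{\ell+m}$ derived in Proposition~\ref{prop1.paulM}, combined with $q^{\ell+m}=1$, to write the specialized entries as
\[
V_{i_a,j}\big|_{\text{spec}}=q^{k_a(b_j+\lambda'_{m-b_j})}\,w^{\tilde\lambda_j},
\]
where $b_j=(N-j)\bmod m$. Thus within each residue class $C_b=\{j\in[N]:(N-j)\equiv b\pmod m\}$ the columns of the $I$-submatrix are mutually proportional (same $q$-pattern across rows, scaled only by $w$-powers). A Laplace expansion of $\det V$ along the rows indexed by $I$ therefore selects only transversal column-sets $J=\{j_b\}_{b=0}^{m-1}$ picking one column per $C_b$. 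For any such $J$,
\[
\det V_{I,J}=(-1)^{\binom{m}{2}}\Delta_{\lambda'}(q^{k_1},\ldots,q^{k_m})\prod_{b=0}^{m-1}w^{\tilde\lambda_{j_b}},
\]
upon identifying $\det(q^{k_a(b+\lambda'_{m-b})})_{a,b}$ as the shifted Vandermonde of $\lambda'$ at $q^{k_1},\ldots,q^{k_m}$ via the re-indexing $s=m-b$ (giving $\lambda'_s+(m-s)$).

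The remaining combinatorial step is to establish the identity
\[
\sum_{J\text{ transv.}}\!\!\varepsilon(I,J)\prod_{b}w^{\tilde\lambda_{j_b}}\det V_{I^c,J^c}\;=\;\pm\,w^{|\lambda'|+\binom{m}{2}}\,\Delta_{\lambda(N-m,m,\ell,\lambda')}(\vec z_{\ssm I})\!\!\prod_{\substack{a\in[m]\\ j\in I^c}}\!\!(q^{k_a}w-z_j),
\]
after which dividing numerator by denominator cancels the product $\prod(q^{k_a}w-z_j)$, converts the Vandermonde ratios into $s_{\lambda'}(q^{k_1},\ldots,q^{k_m})$ and $s_{N-m,m,\ell,\lambda'}(\vec z_{\ssm I})$, and leaves the overall factor $w^{|\lambda'|}$, yielding (\ref{eq.koreLM}). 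The heuristic mechanism is that, within each $C_b$, the columns \emph{not} picked by the transversal correspond (after a column rescaling by $z_i^{\ell+m}$) to the shifted-Vandermonde columns of $\lambda(N-m,m,\ell,\lambda')$, while summing over the choice $j_b\in C_b$ performs a Cauchy--Binet-type expansion that telescopes to the product $\prod_{a,j}(q^{k_a}w-z_j)$.

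\textbf{Main obstacle.} The identification of the transversal sum in the previous paragraph is the crux of the argument. A cleaner alternative I would fall back on is: first use Proposition~\ref{prop1.paulM} to conclude a priori that $\prod_{j\in I^c,h\in K^c}(z_j-q^h w)$ divides the LHS (since any further specialization $z_j=q^h w$ with $h\notin K$ creates an $(m{+}1)$-wheel); match total degrees to force the quotient to have the shape $C(\vec z_{\ssm I})\cdot w^{|\lambda'|}$; and finally pin down $C$ by taking the leading $w\to\infty$ asymptotic via the splitting formula (Proposition~\ref{prop.limschur}) applied to the decomposition $\lambda=\big(\lambda'+(\ell(n-1))^m\,,\,\lambda(N-m,m,\ell,\lambda')\big)$, which produces $s_{\lambda'}(q^{k_1},\ldots,q^{k_m})\cdot s_{N-m,m,\ell,\lambda'}(\vec z_{\ssm I})$. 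Either route reduces to the same combinatorial content, which I expect to be the most delicate step.
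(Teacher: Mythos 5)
Your primary route is not a proof as written: the transversal-sum identity you single out as the crux carries essentially the entire content of the proposition, and you offer only a heuristic (``Cauchy--Binet-type telescoping'') for it. Your fallback route, however, is essentially the paper's own proof, in the same three steps: Proposition \ref{prop1.paulM} forces divisibility of the specialization by $\prod_{j\in[N]\ssm I}\prod_{h\in[\ell+m]\ssm K}(z_j-q^hw)$; a comparison of both the minimal and the maximal degree in $w$ (not merely a total-degree match) of $\Delta_{\lambda}(\vec z_{\ssm I},q^{k_1}w,\ldots,q^{k_m}w)$ against the exposed factors shows that the quotient $F^{(K)}$ is homogeneous of degree $|\lambda'|$ in $w$; and the splitting formula, Proposition \ref{prop.limschur}, identifies the remaining $w$-independent factor. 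The one genuine divergence is the choice of limit: the paper sends $w\to 0$, so that the binomial product degenerates to $\prod_{j\notin I}z_j^{\ell}$ and cancels exactly against the identical factor produced by the splitting formula (bottom block $\lambda'$ attached to the variables $q^{k_a}w$, top block $\lambda(N-m,m,\ell,\lambda')+(\ell^{N-m})$ attached to $\vec z_{\ssm I}$), leaving no constants to track. Your $w\to\infty$ version also works, but it obliges you to verify that the root-of-unity factors cancel --- $\bigl(\prod_a q^{k_a}\bigr)^{c}$ coming from the constant shift $c$ of $\lambda'$ in the top block against $\prod_{h\notin K}(-q^{h})^{N-m}$ coming from the leading term of the binomial product --- a check you do not perform. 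With that detail supplied, the fallback argument is complete and coincides with the paper's.
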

\begin{proof}
From Proposition \ref{prop1.paulM} it follows that, for
$I$ and $K$ as above,
$s_{N,m,\ell,\lambda'}(\vec{z})$ satisfies the following equation
\be
\label{eq.koreLM2}
s_{N,m,\ell,\lambda'}(\vec{z}_{\ssm I}, q^{k_1} w, \ldots, q^{k_m} w)
=\bigg(
\prod_{j \in [N] \ssm I}
\prod_{h \in [\ell+m] \ssm K}
(z_j - q^{h} w)
\bigg)
F^{(K)}_{N,m,\ell,\lambda'}(\vec{z}_{\ssm I},w)
\ef,
\ee
for some polynomial
$F^{(K)}_{N,m,\ell,\lambda'}(\vec{z}_{\ssm I},w)$. 
Rewrite the equation above in the form
$
\Delta_{\lambda(N,m,\ell,\lambda')}(\vec{z},q^{k_i} w)
=
\Delta(\vec{z},q^{k_i} w)
\prod_{j,h}
(z_j - q^{h} w)
F^{(K)}_{N,m,\ell,\lambda'}
$.
An easy computation on minimal and maximal degree in $w$ for all the
factors in this expression (other than $F^{(K)}$) shows that
$F^{(K)}_{N,m,\ell,\lambda'}(\vec{z}_{\ssm I},w)$ is homogeneous of
degree $|\lambda'|$ in $w$.
Thus, 
\be
F^{(K)}_{N,m,\ell,\lambda'}(\vec{z}_{\ssm I},w)
\equiv
w^{|\lambda'|}
\lim_{v\rightarrow 0}\frac{
F^{(K)}_{N,m,\ell,\lambda'}(\vec{z}_{\ssm I},v)}{v^{|\lambda'|}}
\ee
and, in order to determine this quantity, it suffices
to divide both sides of equation (\ref{eq.koreLM2}) by
$w^{|\lambda'|}$ and take the limit $w\rightarrow 0$. 
Using Lemma \ref{limit-schur} we find for the
left-hand side of equation (\ref{eq.koreLM2})
\be
\lim_{w\rightarrow 0}\frac{s_{N,m,\ell,\lambda'}(\vec{z}_{\ssm I},
  q^{k_1} w, \ldots, q^{k_m} w)}{w^{|\lambda'|}} =  s_{\lambda'}(q^{k_1}, \ldots,
  q^{k_m}) s_{N-m,m,\ell,\lambda'}(\vec{z}_{\ssm I})
\prod_{j\notin I}z_j^\ell  
\ef.
\ee
The factor $\prod_{j\notin I}z_j^\ell$ simplifies with the the same term
appearing on the right-hand side, from the limit of the product of
binomials $z_j - q^{h} w$. 
Therefore we end up with
\be\label{limit2}
{F^{(K)}_{N,m,\ell,\lambda'}(\vec{z}_{\ssm I},w)}
=
{w^{|\lambda'|}}
s_{\lambda'}(q^{k_1} , \ldots, q^{k_m} )
s_{N-m,m,\ell,\lambda'}(\vec{z}_{\ssm I})
\ef.
\ee
\end{proof}

\noindent
For a symmetric polynomial $P(\vec{z})$ in $N$ variables, and $1 \leq
k \leq N$, call $d_k(P)$ the maximum degree of $P$ in (any) $k$
variables simultaneously. In what follows, when the number of
variables is clear, we will use the shortcuts $d \equiv d_1$ and $D
\equiv d_N$, Recall that, for a Schur function $s_{\lambda}(\vec{z})$,
$d_k = \lambda_1 + \ldots + \lambda_k$.

Among the staircase Schur functions considered in the propositions
above, the subclass $\lambda'_1= \ldots = \lambda'_m = 0$
(i.e.\ $\lambda'=\varnothing$) has the further property of being ``of
minimal degree'' among all symmetric functions satisfying the wheel
condition, in various senses involving this set of degrees $d_k$.  The
following proposition describes some of the possible choices. It is a
generalization to the $m$-staircase case of the $m=2$ situation
analysed in \cite[Thm.~4]{cit.pzj_hs}, but, contrarily to Proposition
\ref{prop1.paulM}, the proof technique is substantially different, as
the Lagrange Interpolation argument used in \cite{cit.pzj_hs} is
specific to $m=2$ (with higher values, some degree counting hypothesis
is not met).

Determining the unicity of a function satisfying a precise set of
conditions and degree bounds is often a useful tool when one wants to
``prove that two (families of) functions are the same''. Despite this
could appear as a rare eventuality, this line of reasoning has already
proven valuable in several enumeration problems related to integrable
systems, ranging from the recognition of the Izergin determinant
\cite{izergin}, and its identification as a Schur function
\cite{cit.strog}, up to the ``higher-spin'' cases in
\cite{cit.pzj_hs}.  We report the following result, with the hope that
it may be useful in generalizations of six-vertex and loop models
involving simultaneously both ``higher-spin'' and ``higher rank'',
i.e.\ higher values of $m$ (besides $m=2$) in representations of the
quantum affine algebra $q$-deforming $\mathfrak{sl}(m)$.

\begin{prop}
\label{thm.paulMmin}
Let $N=am+b$, with $a\geq 0$ and $1 \leq b \leq m$.
The symmetric polynomial in $N$ variables
$s_{N,m,\ell}(\vec{z}):=s_{N,m,\ell,\varnothing}(\vec{z})$ 
has $(D,d,d_m)=(D^*,d^*,d_m^*)$, with
\be
(D^*,d^*,d^*_m) = 
\left( 
a \ell \left( \smfrac{m (a-1)}{2} + b \right),
a\ell,
(N-m)\ell
\right)
\ef.
\ee
It is the unique symmetric function satisfying the $(m,\ell)$-wheel
condition (up to multiplication by a scalar), and any of the following
degree conditions:
\begin{enumerate}
\item[$(a)$] $d \leq d^*$ and $D \leq D^*$;
\item[$(b)$]
$d_m \leq d_m^*$;
\item[$(b')$] $d \leq d^*$ and $m$ divides $N$;
\item[$(c)$] $f_{m,\ell}(D,d) \leq f_{m,\ell}(D^*,d^*)$, for
  $f_{m,\ell}(D,d) = \frac{\ell}{m} D + \frac{d(d+\ell)}{2}$.
\end{enumerate}
\end{prop}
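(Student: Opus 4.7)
The strategy is to first verify the claimed values of $(D^*, d^*, d_m^*)$ by direct inspection of the partition, and then to prove uniqueness by induction on $a$, using the wheel condition to reduce from $N$ to $N-m$ variables.

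For the degree computation, under $\lambda'=\varnothing$ with $N = am+b$, $1 \leq b \leq m$, the partition $\lambda = \lambda(N,m,\ell,\varnothing)$ consists of $b$ top parts all equal to $a\ell$, followed by blocks of $m$ equal parts taking the successive values $(a-1)\ell, (a-2)\ell, \ldots, \ell, 0$. Hence $\lambda_1 = a\ell = d^*$; the sum of the top $m$ parts is $b \cdot a\ell + (m-b)(a-1)\ell = (am-m+b)\ell = (N-m)\ell = d_m^*$; and $|\lambda| = ab\ell + m\ell\cdot\frac{a(a-1)}{2} = a\ell\bigl(b + \frac{m(a-1)}{2}\bigr) = D^*$. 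That $s_{N,m,\ell}$ satisfies the $(m,\ell)$-wheel condition is Proposition~\ref{prop1.paulM}.

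For uniqueness I proceed by induction on $a$. The base case $a=0$ (so $N = b \leq m$) is immediate: the wheel condition is vacuous since it requires $m+1 > N$ variables, the partition is all zero so $s_{N,m,\ell}=1$, and each of the hypotheses (a)--(c) forces $f$ to be a constant. For the inductive step, given a symmetric $f$ satisfying the wheel condition and one of the degree hypotheses, I specialize $z_i \to q^{i-1}w$ for $i=1,\ldots,m$. Applying the wheel condition with index subset $I=\{1,\ldots,m,j\}$ and shift subset $K=\{0,\ldots,m-1,k\}$, for each $j>m$ and $k \in \{m,\ldots,\ell+m-1\}$, forces $f|_{\text{spec}}$ to vanish whenever $z_j = q^k w$, so that
\be
\label{eq:planfactor}
f|_{\text{spec}}
=
\bigg(\prod_{j=m+1}^{N}\prod_{k=m}^{\ell+m-1}(z_j - q^k w)\bigg)
\;g(z_{m+1},\ldots,z_N;w)
\ee
for some polynomial $g$. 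Wheel conditions on $f$ with index subsets disjoint from $\{1,\ldots,m\}$ transfer to $g$ (viewed as a symmetric polynomial in $z_{m+1},\ldots,z_N$ with $w$ a parameter), so $g$ satisfies the $(m,\ell)$-wheel condition on these $N-m$ variables. A degree count shows $g$ inherits bounds matching those of the inductive hypothesis for the smaller system ($N-m = (a-1)m+b$).

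By induction, $g = c(w)\,s_{N-m,m,\ell}(z_{m+1},\ldots,z_N)$, and the residual degree hypothesis on $f$ forces $c(w)$ to be a scalar: in case (b) this is immediate, since $d_m\le d_m^* = (N-m)\ell$ matches exactly the $w$-degree of the extracted product, leaving no $w$-budget for $c$; in the other cases the same conclusion follows by subtracting the degree of the product from the bounds on $D$ and $d$. Comparing (\ref{eq:planfactor}) with the recursion for $s_{N,m,\ell}$ provided by Proposition~\ref{thm.paulM} (where $s_\varnothing = 1$) shows $f = c\cdot s_{N,m,\ell}$ on the subspace $z_i=q^{i-1}w$. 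By symmetry of $h := f - c\cdot s_{N,m,\ell}$, this vanishing extends to the full $S_N$-orbit of the specialization subspace; running the same reduction with every choice of subsets $I \subseteq [N]$ and $K\subseteq[\ell+m]$ of size $m$ yields enough constraints to force $h \equiv 0$ within the space of symmetric polynomials of the prescribed bounded degrees.

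The main obstacle is the last step: rigorously arguing that the collective vanishings of $h$, together with the degree bound, force $h \equiv 0$. This amounts to the statement that the space of symmetric polynomials of the prescribed degree satisfying the $(m,\ell)$-wheel condition is at most one-dimensional, which must be verified separately under each of the four hypotheses (a), (b), (b'), (c). Among these, (c) --- involving the quadratic function $f_{m,\ell}(D,d) = \frac{\ell}{m}D + \frac{d(d+\ell)}{2}$ --- is the subtlest, since the reduction to an inductive degree count requires a non-obvious interpolation between the $d$-bound and the $D$-bound that is not available from either hypothesis separately.
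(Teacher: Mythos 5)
Your degree computation, the base case $a=0$, the specialization $z_{i}=q^{k_i}w$ with the extracted product of binomials, and the induction on $N$ applied to the quotient all match the paper's argument. But the step you yourself flag as ``the main obstacle'' is not a loose end to be tidied up later: it is the central idea of the uniqueness proof, and it is absent. Two concrete gaps. First, you normalize the constant $c$ using a single choice of $K=\{0,\dots,m-1\}$; before you can expect $h=f-c\,s_{N,m,\ell}$ to vanish on the specializations attached to \emph{other} shift sets $K$, you must show the constants $c_K$ agree for all $K$ — the paper does this by setting $w=0$ in the recursion and comparing with $P(\vec z_{\ssm I},0,\dots,0)$. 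Second, and more importantly, your closing claim that ``running the same reduction with every choice of subsets yields enough constraints to force $h\equiv 0$'' is precisely what has to be proved, and no mechanism for proving it is offered.

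The paper's mechanism is this: once $h$ vanishes on \emph{every} specialization $z_{i_a}=q^{k_a}w$ with $|I|=|K|=m$, and since $q=\exp(2\pi i/(\ell+m))$ with $(m-1)+(\ell+1)=m+\ell$, this is by definition the statement that $h$ satisfies the $(m-1,\ell+1)$-wheel condition. The proof is therefore a \emph{double} induction, on $N$ and on $m$ (your induction is on $N$ only): by the induction hypothesis in $m$, a nonzero symmetric polynomial in $N$ variables satisfying the $(m-1,\ell+1)$-wheel condition is a scalar multiple of $s_{N,m-1,\ell+1}$ and hence has degrees at least $(D',d',d'_{m-1})$; one then checks $d<d'$, $d_m\leq d_{m-1}<d'_{m-1}$ and $f_{m-1,\ell+1}(D,d)<f_{m-1,\ell+1}(D',d')$ with strict inequalities, which kills $h$ under each of the hypotheses $(a)$, $(b)$, $(b')$, $(c)$ at once — so case $(c)$ requires no special interpolation, contrary to your worry. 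The base of the $m$-induction is $m=1$, where the $(1,\ell)$-wheel condition is divisibility by $\prod_{i<j}\prod_{k=1}^{\ell}(z_i-q^kz_j)$ and minimality is elementary. Without recasting the collective vanishings as a lower-rank wheel condition, I do not see how your final step can be completed, so as written the proposal does not constitute a proof.
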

\begin{proof}
Clearly $(b')$ is implied by $(b)$ and $d_m \leq m d$ for any
polynomial, so it suffices to concentrate on the three cases $(a)$,
$(b)$ and $(c)$.  Also, clearly a degree condition $d \leq d^*$ alone
would fail unicity, as, if $b<m$, any $s_{N,m,\ell,\lambda'}$, such
that $\lambda'$ has at most $m-b$ non-zero parts and $\lambda'_1 \leq
\ell$, would work.

The fact that the Schur functions above satisfy the claimed wheel
condition has been already proven in Proposition \ref{prop1.paulM},
and the degrees are easily calculated.  So we just have to prove
degree minimality, and unicity.

If we have $a=0$ (i.e.\ $N \leq m$), for arbitrary $m$ and $\ell$, the
statement is trivial because
the wheel condition is empty (there are no wheel
hyperplanes), and indeed $s_{N,m,\ell}(\vec{z})=1$ in this case.

The case $m=1$, and arbitrary $N$ and $\ell$, is also fairly simple. A
polynomial in $N$ variables $P(\vec z)$ satisfies the $(1,\ell)$-wheel
condition if and only if, for all $i < j$ and $1\leq k\leq \ell$, it
is divided by $z_i-q^k z_j$. Therefore the polynomial of minimal
degree satisfying the wheel condition consists of the product of these
factors,
and indeed coincides with $s_{N,1,\ell}(\vec{z}) \equiv
s_{\mu_{N,\ell}}(\vec{z})$.

The proof for generic values of $m$ and $N$, and any of the degree
conditions in the list, is done by a double induction on $N$ and $m$,
using the cases above as a basis.
%
Let us assume the statement to be true up to the value $m-1$, and, for
the value $m$, up to $N-1$ variables.  Then suppose that $P(\vec z)$
is a symmetric
polynomial in $N$ variables, satisfying the
$(m,\ell)$-wheel condition, and with a degree triple $(D,d,d_m)$
satisfying any of the conditions.  We want to show that, up to
rescaling $P(\vec z)$ by a constant factor, 
$P(\vec{z})=s_{N,m,\ell}(\vec{z})$.

We know from Proposition \ref{thm.paulM} that, for $I$ and $K$
appropriate sets (i.e., $I \subseteq [N]$ and $K \subseteq [\ell+m]$,
$|I|=|K|=m$), $P(\vec z)$ satisfies the following equation
\be
\label{eq.koreLM3}
P(\vec{z}_{\ssm I}, q^{k_1} w, \ldots, q^{k_m} w)
=\bigg(
\prod_{\substack{ j \in [N] \ssm I \\ h \in [\ell+m] \ssm K}}
\!\!\!\!
(z_j - q^{h} w)
\bigg)
\;
F^{(K)}_{N,m,\ell}(\vec{z}_{\ssm I},w)
\ef,
\ee 
for some polynomial $F^{(K)}_{N,m,\ell}(\vec{z}_{\ssm I},w)$,
symmetric in the $N-m=(a-1)m+b$ variables $\{z_j \}_{j \not\in I}$,
and satisfying the $(m,\ell)$-wheel
condition on the remaining variables $z_j$.

Call $d(F)$ the maximum degree of $F$ in one variable, seen as a
polynomial in variables $z_j$ only, $d_w(F)$ the degree as a
polynomial in $w$ and $D_w(F)$ the maximum total degree of $F$, in
$z_j$'s and $w$.  From the degree triple of $P$ and the exposed
binomial factors, it is easy to realize that
\begin{align}
\label{eq.2165458}
D_w(F)
&\leq D(P) - (N-m)\ell
\ef,
\\
d(F) &\leq
d(P) - \ell
\ef,
\\
d_w(F) 
&
\leq d_m(P) - (N-m)\ell
\leq m\, d(P) - (N-m)\ell
\end{align}
(The inequalities come from the fact that cancellations may occur in
$P$ from the specialization.  The equation for $d_w(F)$ is obtained by
considering in $P$ the $m$-uple of variables $\{z_i\}_{i \in I}$.)
Furthermore, if $N \geq 2m$, $d_m(F)$ is defined, and we can also state
\be
d_m(F)
\leq d_m(P) - m \ell
\ef.
\ee
(This equation is obtained by considering in $P$ the $m$-uple of
variables $\{z_i\}_{i \in J}$ for some $J$ of size $m$ and disjoint
from $I$).

From the bounds above on the degree of $F$, and the fact that
$F^{(K)}_{N,m,\ell}(\vec{z}_{\ssm I},w)$ must satisfy the
$(m,\ell)$-wheel condition on $m$-uples of the $N-m$ remaining
variables $z_j$, we can prove in the various cases one of the
following
\begin{subequations}
\begin{gather}
D_w(F) \leq
D^*(N-m,m,\ell)
\ef;
\\
d_w(F) \leq 0
\textrm{\quad and\quad}
\big(\ 
d_m(F) \leq d^*_m(N-m,m,\ell)
\textrm{\quad or\quad}
d(F) = 0
\ \big)
\ef;
\\
f_{m,\ell}\big( D_w(F),d(F) \big)
\leq
f_{m,\ell}\big( D^*(N-m,m,\ell),d^*(N-m,m,\ell) \big)
\ef;
\end{gather}
\end{subequations}
and by induction on $N$ we conclude that
\be
\label{eq.45659254}
F^{(K)}_{N,m,\ell}(\vec{z}_{\ssm I},w)= 
c_K
\;
s_{N-m,m,\ell}(\vec{z}_{\ssm I})
\ef,
\ee
for some constant $c_K$.
However, $c_K$ cannot depend on $K$ either. This is seen by
specializing equations (\ref{eq.koreLM3}) and (\ref{eq.45659254}) to
$w=0$, which gives
\be
\label{eq.koreLM3bis}
P(\vec{z}_{\ssm I}, 0, \ldots, 0)
=
c_K
\;
s_{N-m,m,\ell}(\vec{z}_{\ssm I})
\prod_{j \in [N] \ssm I}
z_j^{\ell}
\ef.
\ee 

So, up to a multiplicative factor in $P$, we know that for any $I$ and
$K$ as above, the specialization to $z_{i_a}=q^{k_a}w$ of $P(\vec z)$
and of $s_{N,m,\ell}(\vec{z})$ are equal.  This is rephrased by saying
that the difference $R(\vec z):= s_{N,m,\ell}(\vec{z})-P(\vec z)$ is a
symmetric polynomial
satisfying the $(m-1,\ell+1)$-wheel condition, and furthermore
implies easily that $D(R)$, $d(R)$ and $d_m(R)$ are a triple of
entries smaller or equal to some triple $(D,d,d_m)$ satisfying (one of)
the degree condition under consideration (because $P$ does this by
hypothesis, and the Schur function does it explicitly, and the
difference can at most decrease the degrees through cancellations).

As all the degree conditions in our proposition are monotonic (in
particular, $f_{m,\ell}(D+\alpha,d+\beta) \geq f_{m,\ell}(D,d)$ if
$\alpha,\beta \geq 0$), the quantities in the conditions, as
functions of $D(R)$, $d(R)$ and $d_m(R)$, are bounded from above by
the analogous quantities as functions of $D^*$, $d^*$ and $d^*_m$ (for
parameters $(m,\ell)$).

Making an induction hypothesis in $m$, these degree bounds are to be
compared with the bounds for a symmetric function in $N$ variables,
satisfying the $(m-1,\ell+1)$-wheel condition, stated in the
proposition. Therefore, in our range of interest $m \geq 2$, $a \geq
1$, write $N=am+b=\tilde a(m-1)+\tilde b$, with $1\leq \tilde b\leq
m-1$. Clearly $\tilde a \geq a$. The triple entering the bounds to the
degrees of $R$ for the $(m,\ell)$ case reads
\begin{align}
D
&=
\ell
\big(
m \smbinom{a}{2} + a b
\big)
\ef;
\\
d
&=
\ell a
\ef;
\\
d_m
&=
\ell (N-m)
\ef;
\end{align}
while the triple entering the bounds for the $(m-1,\ell+1)$ case reads
\begin{align}
D'
&=
(\ell+1)
\big(
(m-1) \smbinom{\tilde{a}}{2} + \tilde{a} \tilde{b}
\big)
\ef;
\\
d'
&=
(\ell+1) \tilde{a}
\ef;
\\
d'_{m-1}
&=
(\ell+1) (N-m+1)
\ef.
\end{align}
In particular, $f_{m,\ell}(D,d) = \ell^2 a N/m$
and $f_{m-1,\ell+1}(D',d') = (\ell+1)^2 \tilde{a} N/(m-1)$.
As we have
\begin{subequations}
\begin{align}
d &< d'
\ef;
\\
d_{m-1} \leq d_m 
&< d'_{m-1}
\ef;
\\
f_{m-1,\ell+1}(D,d)
&<
f_{m-1,\ell+1}(D',d')
\ef;
\end{align}
\end{subequations}
(the last inequality comes with some algebra: the
difference is
$
f_{m-1,\ell+1}(D,d)
-
f_{m-1,\ell+1}(D',d')
=
-\frac{(\ell+1)N}{m-1}
\big( (\ell+1) \tilde{a} - \ell a \big)
-
\frac{\ell (\ell+m)}{m-1} \binom{a+1}{2}
$
and is negative at sight),
for any of the conditions in our list
we reach the conclusion that $R(\vec{z})=0$.
\end{proof}

\section*{Acknowledgements} 
\noindent
Part of the statements proven in this paper have been conjectured in
September 2009, when two of us (L.C.\ and A.S.) had the opportunity of
working together, within the programme \emph{StatComb09} at the
Institut H.~Poincar\'e -- Centre \'Emile Borel in Paris, that we thank
for support.

We thank Alain Lascoux for important discussions. In particular, at a
preliminary stage of this work, he suggested us the use of Bazin
Theorem for dealing with compound determinants.  Those conversations
had a crucial role in the development of our proof.




\end{document}